\definecolor{myred}{RGB}{183,18,52}
\definecolor{myyellow}{RGB}{254,213,1}
\definecolor{myblue}{RGB}{0,80,198}
\definecolor{mygreen}{RGB}{0,155,72}
\let\emph\relax 
\DeclareTextFontCommand{\emph}{\bfseries\em}
\newcommand{\mA}{\mathcal{A}}
\newcommand{\mC}{\mathcal{C}}
\newcommand{\mD}{\mathcal{D}}
\newcommand{\mE}{\mathcal{E}}
\newcommand{\mF}{\mathcal{F}}
\newcommand{\mJ}{\mathcal{J}}
\newcommand{\mL}{\mathcal{L}}
\newcommand{\mV}{\mathcal{V}}
\newcommand{\mX}{\mathcal{X}}
\newcommand{\CC}{\mathbb{C}}
\newcommand{\NN}{\mathbb{N}}
\newcommand{\RR}{\mathbb{R}}
\newcommand{\ZZ}{\mathbb{Z}}
\newcommand{\into}{\hookrightarrow}
\newcommand{\mS}{\mathcal{S}}
\newcommand{\mT}{\Omega}
\newcommand{\CP}[1]{\mathbb{CP}^{#1}}
\newcommand{\w}{\omega}
\DeclareMathOperator{\id}{id}
\DeclareMathOperator{\UU}{U}
\DeclareMathOperator{\Sp}{Sp}
\DeclareMathOperator{\Emb}{Emb}
\DeclareMathOperator{\Conf}{Conf}
\DeclareMathOperator{\Diff}{Diff}
\DeclareMathOperator{\Symp}{Symp}
\DeclareMathOperator{\Aut}{Aut}
\DeclareMathOperator{\ev}{ev}
\DeclareMathOperator{\cod}{cod}
\DeclareMathOperator{\PD}{PD}
\DeclareMathOperator{\diag}{diag}
\newtheorem{thm}{Theorem}[section]
\newtheorem{dfn}[thm]{Definition}
\newtheorem{cor}[thm]{Corollary}
\newtheorem{lma}[thm]{Lemma}
\newtheorem*{lma*}{Lemma}
\newtheorem{prp}[thm]{Proposition}
\newtheorem{rmk}[thm]{Remark}
\newtheorem{conj}[thm]{Conjecture}
\begin{document}

\title[Stability of symplectomorphism groups]{Stability of the symplectomorphism groups of rational surfaces}

\author{ S{\'{\i}}lvia Anjos}
\email{sanjos@math.tecnico.ulisboa.pt}
\address{Center for Mathematical Analysis, Geometry and Dynamical Systems\\ Department of Mathematics\\  Instituto Superior T\'ecnico\\  Av. Rovisco Pais, 1049-001 Lisboa\\ Portugal}
\thanks{The first author is partially supported by FCT/Portugal through projects UID/MAT/04459/2020 and PTDC/MAT-PUR/29447/2017.}

\author{Jun Li}
\email{jli004@udayton.edu}
\address{Department of Mathematics\\  University of Dayton\\ Dayton, OH 45469}
\thanks{The second author is supported by AMS-Simons Travel Grant.}

\author{Tian-Jun Li}
\email{lixxx248@umn.edu}
\address{School of Mathematics\\  University of Minnesota\\ Minneapolis, MN 55455}
\thanks{The third author is supported by NSF Grant.}

\author{Martin Pinsonnault}
\email{mpinson@uwo.ca}
\address{Department of Mathematics\\  University of Western Ontario\\Canada}
\thanks{The fourth author is partially supported by NSERC Discovery Grant RGPIN-2020-06428.}

\subjclass[2020]{Primary 57K43; Secondary 57R17, 57S05, 57R40}

\keywords{symplectomorphism groups, symplectic balls, rational ruled 4-manifolds, J-holomorphic curves.}

\begin{abstract}
We apply Zhang's almost K\"ahler Nakai-Moishezon theorem and Li-Zhang's comparison of $J$-symplectic cones to establish a stability result for the symplectomorphism group of a rational $4$-manifold $M$ with Euler number up to $12$. As a corollary, we also derive a stability result for the space of embedded symplectic balls in $M$. A noteworthy feature of our approach is that we systematically explore various spaces and groups associated to a symplectic cohomology class $u$ rather than with a single symplectic form $\w$. To this end, we prove a weaker version of the tamed $J$-inflation procedures of D. McDuff and O. Buse that fixes a gap in their original formulations.   
\end{abstract}

\maketitle

\setcounter{tocdepth}{2}

\tableofcontents

\section{Introduction}\label{Intro}

Let $M$ be a closed, oriented, smooth  4-manifold and let $\w$ be a symplectic form on $M$. When endowed with the standard $C^{\infty}$-topology, both the group $\Diff^+(M)$ of orientation-preserving diffeomorphisms and its closed subgroup $\Symp(M,\omega)$ of symplectomorphisms are infinite-dimensional Fr\'echet Lie groups. The group $\Diff^+(M)$ acts naturally on the space $\mT_M$ of symplectic forms on $M$ that are compatible with the given orientation, while the connected component of the identity $\Diff_0(M)$ acts on the subspace $\mT_{[\w]}$ of symplectic forms that are cohomologous to $\w$. Moser's lemma implies that the later action is transitive on each connected component of $\mT_{[\w]}$. In particular, if $ \mT_{\w}$ denotes the component containing $\w$, the evaluation map defines the so-called Moser fibration
\begin{equation}\label{MoserFibration}
\Symp(M,\w)\cap  \Diff_0(M)\to \Diff_0(M) \to \mT_{\w}
\end{equation}
This fibration was investigated by P. Kronheimer in~\cite{Kro99} using Seiberg-Witten theory to show the existence of non-trivial elements in $\pi_*\mT_{\w}$ and $\pi_*\Symp(M,\w)$ for some algebraic surfaces of general type.
 
As was first observed by D. McDuff in~\cite{McD01}, in order to get insights on the homotopy type of $\Symp(M,\w)$, it is often more convenient to replace the space $\Omega_\w$ of symplectic forms isotopic to $\w$ by the homotopy equivalent space $\mA_{\w}$ of all almost complex structures that are compatible with some symplectic form in $\mT_{\w}$, and to study the homotopy long exact sequence associated with the maps
\begin{equation}\label{McDuffFibration}
\Symp(M,\w)\cap  \Diff_0(M)\to \Diff_0(M) \to \mA_{\w}
\end{equation}
Combined with the symplectic inflation technique, this approach is especially effective in the case $M$ is a rational surface due to the abundance of $J$-holomorphic curves. For instance, it was shown in~\cite{AM99, LP04, ALP, AP13, AE17, LL16, LLW16} that, for certain rational surfaces, the symplectic cone
\begin{multline*}
\mC_M = \{u\in H^2(M;\RR)~|~u \text{~is represented by a symplectic form}\\ \text{compatible with the orientation}\}
\end{multline*}
admits a decomposition into polygonal regions that determine entirely the homotopy type of $\Symp(M, \w)$. This phenomenon is known as \emph{stability} of symplectomorphism groups, and the corresponding regions are called \emph{stability chambers}. 

In this note, we establish a finer version of stability for all rational surfaces with Euler number $\chi\leq 12$, that is, for all symplectic $4$-manifolds diffeomorphic to $S^2\times S^2$ or to a $k$-fold blow-ups of $\CP{2}$ with $0\leq k\leq 9$.

\begin{thm}\label{FinerStabilityThm}
Let $M$ be a rational surface with $\chi\leq 12$. For each integer $n\geq 1$, or for $n=\infty$, the symplectic cone $\mC_M$ admits a partition into convex regions, called {level $n$ stability chambers}, such that
\begin{enumerate}[label=(\roman*)]
\item each level $n$ chamber is a convex polyhedron defined by finitely many inequalities;
\item given two symplectic forms $\w$ and $\w'$ whose cohomology classes belong to the same level $n$ chamber, $n\geq 2$, we have $\pi_i\Symp(M,\w)=\pi_i\Symp(M,\w')$ for all $0\leq i\leq  2n-3$. In particular,  $\Symp(M,\w)$ is weakly homotopy equivalent to $\Symp(M,\w')$ whenever $[\w]$ and $[\w']$ belong to the same level $\infty$ chamber.
\end{enumerate}
\end{thm}

This stability theorem encapsulates all the previous stability results for symplectomorphism groups of rational surfaces into a single coherent statement. For instance, the level $1$ chambers determine the anticanonical class $K_\w$ associated to a symplectic form~\cite{LL01}, and correspond to the $K$-symplectic cone of~\cite{LL01}. Similarly, the level 2 stability chambers can be characterized in terms of reduced classes and, as explained in~\cite{LL16, LLW16}, determine both $\pi_0 \Symp(M,\w)$ and $\pi_1 \Symp(M,\w)$. Finally, all the stability regions described in~\cite{AM99, LP04, P08i, AP13, AE17} correspond to level $\infty$ stability chambers. Examples of stability chambers for manifolds of small Euler numbers are further discussed in section~\ref{section:ChambersSmallEulerNumbers}. We note that although the stability theorem alone does not compute $\pi_i(\Symp(M,\w))$, it does offer significant insights to understanding the homotopy type of $\Symp(M,\w)$, which is already rather complicated when $\chi\geq 4$ (see~\cite{LP04, AP13, AE17} for $\chi=5,6,7$ respectively). 

To prove the stability theorem~\ref{FinerStabilityThm}, we collect together old and new results that yield a characterization of stability chambers in terms of embedded symplectic spheres in specific homology classes of negative self-intersections (see Definition \ref{def:level n chamber}). One new feature is that we systematically explore various spaces and groups associated with a symplectic cohomology class $u$ rather than with a single symplectic form $\w$. This works best for $4$-manifolds for which cohomologous symplectic forms are diffeomorphic\footnote{To our knowledge, the only known examples of cohomologous symplectic forms that are not diffeomorphic occur in dimensions $\geq 6$. It may be possible that no such examples exist in dimension~$4$.}, in particular for rational or ruled surfaces (cf.~\cite{LM96, LL01}, and the surveys~\cite{Ltjforms, Salamonsurvey}). In this situation, given a cohomology class $u\in\mC_M$, the group $\Diff_u(M)$ of diffeomorphisms fixing $u$ acts transitively on the space $\mT_u$ of symplectic forms cohomologous to $u$. {The isotropy group of the action  $\Diff_u(M) \curvearrowright \mT_u$  at $\w\in \mT_u$ is exactly $\Symp(M, \w)$, because the isotropy preserves $\w$ and $\Symp(M, \w)$ is a subgroup of $\Diff_u(M)$. Hence this action yields a fibration}
\begin{equation}\label{UFIB}
\Symp(M, \w)\to \Diff_u(M) \to \mT_{u}.
\end{equation}
As we will see, the above fibration~\eqref{UFIB} provides a more natural approach to Theorem~\ref{FinerStabilityThm} than the fibrations~\eqref{MoserFibration}  or~\eqref{McDuffFibration}, and allows us to streamline many of the core arguments.

Another important ingredient for Theorem~\ref{FinerStabilityThm} is the tamed $J$-inflation procedure of D. McDuff and O. Buse. However, as was recently noted by P. Chakravarthy, the proofs given in~\cite{McD01, Buse11} make the unwarranted assumption that for every $\w$-tame $J$ and every $J$-holomorphic curve $Z$, one can find a family of normal planes that is both $J$-invariant and $\w$-orthogonal to $TZ$. This is true only if $\w$ is compatible with $J$ at every point of $Z$. We prove here a weaker version of \cite[Lemma 3.1]{McD01}  and \cite[Theorem 1.1]{Buse11}  whose proof relies on Li-Zhang's comparison of $J$-symplectic cones~\cite{LZ09}.
\begin{lma}[Weak $b^+=1$ $J-$compatible inflation]\label{Weak Inflation b1+}
Let $M$ be a symplectic $4$-manifold with $b^+=1$. Given a compatible pair $(J,\w)$ and a $J$-holomorphic embedded curve $Z$, there exists a symplectic form $\w'$ compatible with $J$ such that $[\w']= [\w]+ t PD(Z)$, $t\in [0,\lambda)$ where $\lambda= \infty$ if $Z\cdot Z\ge0$ and $\lambda= \frac{\w(Z)}{(-Z\cdot Z)}$ if $Z\cdot Z<0$.
\end{lma}
This weaker $J$-inflation lemma is sufficient for the proof of Theorem~\ref{FinerStabilityThm}.  We point out that it is also sufficient for all known results on symplectomorphism groups of $4$-manifolds with $b^+(M)=1$ relying on $J$-inflation, filling possible gaps in previous papers\footnote{Recently, P. Chakravarthy and  M. Pinsonnault were able to restore the tamed version when $Z\cdot Z\leq 0$ (\cite{CP2019}).}.

In a slightly different direction, we observe that the stability theorem for symplectomorphism groups has immediate implications for the structure of other spaces of interest in symplectic topology. For example, in dimension $4$, recall that there is a close relation between i) the space $\Emb_{\w}(B^4(c), M)$ of symplectic embeddings of a standard ball $B^4(c)$ of capacity $c$ into $(M,\w)$, ii) the symplectomorphism group $\Symp(M,\w)$, and iii) the symplectomorphism group $\Symp(\widetilde{M}, \widetilde{\w}_c)$ of the symplectic blow-up of capacity $c$. If the space $\Emb_{\w}(B^4(c), M)$ is connected, and if the capacity $c$ is not too large, it was shown in~\cite{LP04} that there is a homotopy fibration
\[\Symp(\widetilde{M}, \widetilde{\w}_c) \to \Symp(M,\w) \to \Emb_{\w}(B^4(c), M)/\Symp(B^4(c))\]
In order to extend this result, the fibration~\eqref{UFIB} is particularly convenient as it provides a simple way to combine the techniques of~\cite{LP04, P08i} with the stability theorem~\ref{FinerStabilityThm}. For instance, it yields the following stability result for the embedding spaces $\Emb_{\w}(B^4(c), M)$.
\begin{thm}\label{StabilitySingleBall}
Let $(M,\w)$ denote a rational surface with $\chi(M)\leq 11$ and of capacity $c_M$. Then, given two capacities $0<c<c'<c_M$, the restriction map $r_{c',c}:\Emb_{\w}(B^4(c'), M)\to \Emb_{\w}(B^4(c), M)$ is at least $(2n-3)$-connected as long as the cohomology classes of the symplectic blow-ups $[\widetilde{\w}_{c'}]$ and $[\widetilde{\w}_c]$ belong to the same level $n\geq2$ stability chamber of $\widetilde{M}=M \# \overline{ \CC P^2}$. In particular, the weak homotopy type of $\Emb_{\w}(B^4(c), M)$ is stable under deformations of $c$ as long as the cohomology class $[\widetilde{\w}_c]$ varies within a level $\infty$ stability chamber of the blowup $\widetilde{M}$.
\end{thm}
By adapting some arguments of~\cite{LP04, P08i}, the homotopy type of the embedding space $\Emb_{\w}(B^4(c), M)$ can be explicitly determined in some special cases. In particular, when the capacity $c$ is small enough, we obtain the following description of $\Emb_{\w}(B^4(c), M)$.
\begin{cor}\label{cor:embeddings and stabilizers}
Let $(M,\w)$ denote a rational surface with $\chi(M)\leq 11$. The space of unparametrized balls $\Emb_{\w}(B^4(c), M)/\Symp(B^4(c))$ is weakly homotopy equivalent to $M$ whenever the capacity $c$ is smaller than the symplectic area of any embedded symplectic sphere of negative self-intersection in the blow-up $(\widetilde{M_c},\widetilde{\w}_c)$.
\end{cor}

We note that  similar results hold for symplectic embeddings of $k$ disjoint standard balls of capacities $c_1,\ldots, c_k$ as long as $\chi(M)\leq 12-k$.

We end this introduction with some comments on the condition $\chi(M)\leq 12$ or equivalently,  $M=\CC P^2\# k\overline{\CC P^2}$ with $k\leq 9$, that appears in the statements of Theorem ~\ref{FinerStabilityThm} and Theorem~\ref{main-u version}. On one hand, the proofs of these theorems use a variation of the inflation strategy in~\cite{McD01}, given in Lemma~\ref{Weak Inflation b1+}, that is tailored to the rational or ruled surfaces. This inflation procedure relies on the almost K\"ahler cone Theorem in~\cite{Zha17} which, at the moment of writing, is only known to hold when $\chi(M)\leq 12$.  On the other hand, another important ingredient for the proofs of Theorem ~\ref{FinerStabilityThm} and Theorem~\ref{main-u version} is that the set 
\begin{multline*}
\mS_{\w}^{\leq -3}=\{ A \in H_2(M,\ZZ) ~|~ A \cdot A \leq -3 \text{~and~}A\text{~is represented}\\ \text{by an embedded symplectic sphere}\}
\end{multline*} 
is finite when $\chi(M)\leq12$ (see Lemma \ref{finite}). However, if $\chi(M) > 12$ then the set $\mS_{\w}^{\leq -3}$ can be infinite, see Remark \ref{extra} for more details. It is worth pointing out that these facts are related to the bounded negativity conjecture and to the Nagata conjecture, see the introduction of~\cite{DLW18} and~\cite{Zha17} for more complete discussions.\\

\subsection*{Organization of the paper} Theorem~\ref{FinerStabilityThm} is proven by assembling a number of statements that are discussed in different sections of the paper. We therefore guide the reader by briefly outlining the main steps and pointing to the corresponding sections where they are treated in detail.

The cell structure of symplectic cones is given under Definition~\ref{def:level n chamber}. The characterization of level 1 chambers appears in Proposition~\ref{-1same}, while the description and the special role of level 2 chambers is explained in Proposition~\ref{-2same} and Proposition~\ref{level2}. The higher level chambers are then described in Section~\ref{section:Level>3Chambers}, culminating with Corollary~\ref{asr}. Together, these results prove the first part of Theorem~\ref{FinerStabilityThm}.

The first step in proving the second part of Theorem~\ref{FinerStabilityThm} is to describe a partition of spaces of compatible almost complex structures analogous to the partition of the symplectic cones. This is shown in Section~\ref{section:PartitionAlmostComplexStructures}. Then, the second part of Theorem~\ref{FinerStabilityThm} is reformulated as Theorem~\ref{main-u version} with the associated proof in Section~\ref{section:ProofMainTheoremSecondVersion}.

\subsection*{Convention.} Throughout the paper, $M$ is a closed, oriented, smooth 4-manifold and  $\omega$ is an orientation-compatible symplectic form. 
We will often identify an integral degree 2 homology class with an integral degree 2 cohomology class via Poincar\'e duality, and we use the dot product to denote various pairings.

\subsection*{Acknowledgements}
We are grateful to  Olguta Buse, Richard Hind, Weiwei Wu, and Weiyi Zhang for their helpful conversations. We thank Dusa McDuff for her careful reading of the first draft of this manuscript,  very constructive comments, and for pointing out an oversight in section~\ref{stra}.  We also thank the anonymous referees for their careful reading and many constructive comments.

\section{Symplectic spheres and cell decomposition of symplectic cones}\label{section:Level1and2Chambers}

\subsection{Symplectic embedded spheres}
Let $M$ be a closed, oriented, smooth $4-$manifold and $\Omega_M$ the space of orientation-compatible symplectic forms. The symplectic cone $\mC_M$ $\subset H^2(M;\RR)$ is the set of classes of orientation-compatible symplectic forms. Clearly, it is contained in the  positive cone
\[\mathcal P_M=\{e\in H^2(M;\RR)~|~ e\cdot e>0\}.\]

Let $\mS_{\omega}$ denote the set of homology classes of embedded $\omega$-symplectic spheres and $K_{\w}$ the symplectic canonical class. For any $A\in \mS_{\omega}$, by the adjunction formula,
\begin{equation}\label{AF}
K_{\omega}\cdot A=-A\cdot A -2.
\end{equation}
We introduce the following subsets of $S_\w$ as in~\cite{LL16}: let
\[\mS_{\omega}^{\geq n},  \quad \mS_{\omega}^{>n}, \quad   \mS_{\omega}^{n}, \quad  \mS_{\omega}^{\leq n},\quad  \mS_{\omega}^{< n}\]
be the subsets of $\w-$symplectic spherical classes of self-intersection $\geq n$, $>n$, $=n$, $\leq n$, $<n$ respectively. The set $S_{\w}^{-2}$ turns out to be very useful in the study of $\pi_0\Symp(M,\w)$ and $\pi_1\Symp(M,\w)$ where $M$ is a rational 4-manifold with $\chi(M)\leq 8$, (\cite{LL16}~\cite{LLW16}). In this paper we continue to investigate how the sets of symplectic spherical classes $S_{\w}^{\leq -n}$, $n> 2$, are related to higher homotopy groups of $\Symp(M,\w)$.\\

\begin{dfn}\label{sympu}
For $u\in \mC_M$, let  $\mT_{u}$ denote  the space of symplectic forms in the class $u$. 
\begin{itemize}
\item  Let $\mS_{u}=\cup_{\w\in \mT_{u}} \mS_{\w}$, and define the subsets $\mS_{u}^{\geq n}$, $\mS_{u}^{>n}$,  $\mS_{u}^{n}$, $\mS_{u}^{\leq n}$, and $\mS_{u}^{< n}$ accordingly. 
\item  Similarly, define $\mS_{M}$ and its subsets $\mS_{M}^{\geq n}$, $\mS_{M}^{>n}$,  $\mS_{M}^{n}$, $\mS_{M}^{\leq n}$, and $\mS_{M}^{< n}$ by taking unions over all symplectic classes $u\in\mC_M$. 
\item {For a rational or ruled surface $M$, for convenience let $\Symp(M, u)$ denote $\Symp(M,\w)$ for an arbitrary $\w\in \mT_u$.} 
	As remarked in the introduction above \eqref{UFIB}, the isomorphism type of $\Symp(M, u)$ as a topological group is well defined.
\end{itemize}
\end{dfn}
 
For a rational or ruled surface, we will show in Proposition~\ref{u=w} that  $\mS_u=\mS_{\w}$ for any $\w\in \mT_u$.

\subsection{The canonical class \texorpdfstring{$K_u$}{Ku} and the group \texorpdfstring{$\Diff_u(M)$}{Diffu(M)}}
Let $K\in H^2(M;\mathbb Z)$ and define the $K$-symplectic cone as
\begin{equation}\label{def:K-symplectic cone}
\mC_{M,K}= \{e\in \mC_M~|~e=[\w] \hbox{ with }  K_{\w}=K\}.
\end{equation}

\begin{lma}\label{Ku}
Let $M$ be a closed, smooth 4-manifold and let $u$ be a symplectic class in $\mathcal C_M$. Then all symplectic forms in $\mT_u$ have the same symplectic canonical class, which we denote by $K_u$. Moreover, the group $\Diff_u(M)$ preserves the canonical class $K_u$.
\end{lma}
\begin{proof}
The first statement follows directly from~\cite{Taubesmore95} when $b^+(M)>1$. When $b^+(M)=1$, it follows from Proposition 4.1 in~\cite{LL01} which says that $\mC_{M, K}\cap \mC_{M, K'} =\emptyset$ if $K\ne K'$. The second statement follows from the first since  $K_{u}=K_{\phi^*\w}=\phi^*K_{\w}=\phi^*K_{u}$ for any $\w\in \mT_u$ and any $\phi\in \Diff_u(M)$.
\end{proof}
 
Let $\Aut(H^2(M;\RR))$ be the group of automorphisms of $H^2(M;\RR)$ preserving the intersection form. For any pair of classes $(a, b)$ in $H^2(M;\RR)$, let $D_{(a, b)}\subset \Aut (H^2(M;\RR))$ be the subgroup of automorphisms that are induced by diffeomorphisms and that preserve the classes $a$ and $b$.

Let $\Diff_h(M)$ be the subgroup of diffeomorphisms acting trivially on homology. We have the inclusions $\Diff_0(M)\subset \Diff_h(M)\subset \Diff_u(M)$. Note that $\Diff_0(M)$ is a normal subgroup of $\Diff_h(M)$ since it is the identity component of a Lie group, and $\Diff_h(M)$ is a normal subgroup of $\Diff_u(M)$ since it is  the kernel of the action homomorphism $\Diff_u(M)\to \Aut (H^2(M;\RR))$. Let $\Symp_0(M, \w)$ be the identity component of $\Symp(M, \w)$  and let $\Symp_h(M, \w) = \Symp(M, \w)\cap\Diff_h(M)$. Similarly, we have inclusions $\Symp_0(M, \w)\subset \Symp_h(M, \w)$ and $\Symp_h(M, \w)\subset \Symp(M, \w)$ as normal subgroups. Clearly, for all $\w\in \mT_{u}$, $\Symp(M,\w) \subset \Diff_u(M)$ and the homological action of $\Diff_u(M)$ factors through  $\Diff_u(M)/\Diff_h(M)$. Likewise, the action of $\Symp(M, \w)$ on $\Aut(H^2(M;\RR))$ factors through $\Symp(M, \w)/\Symp_h(M, \w)$.

\begin{lma}\label{homological action} Let $M$ be a rational or ruled surface.
Then, for $u\in \mC_M$ and  any $\w \in \mT_{u}$, the image subgroups of the homomorphisms $\Diff_u(M)\to \Aut (H^2(M;\RR))$ and $\Symp(M,\w)\to \Aut (H^2(M;\RR))$   are both equal to $D_{(K_{u}, u)}$. In other words, 
\[\Diff_u(M)/\Diff_h(M)=D_{(K_u, u)}=\Symp(M, \w)/\Symp_h(M, \w).\]
\end{lma}

\begin{proof}
By Theorem 1.4 and  Proposition 4.14 in~\cite{LW12}, the homological action of $\Symp(M, \w)$ is the group $D_{(K_{\w}, [\w])}$.
Therefore, the homological action of $\Diff_u(M)$ contains  $D_{(K_{\w}, [\w])}$. On the other hand, the homological action of $\Diff_u(M)$ is contained in  $D_{(K_{\w}, [\w])}$ since $\Diff_u(M)$ preserves $u=[\w]$ by definition and preserves $K_u=K_{\w}$ by Lemma~\ref{Ku}.
\end{proof}

\subsection{Properties of the set \texorpdfstring{$\mS_{u}$}{Su} }
Recall that $\mS_{u}=\cup_{\w\in \mT_{u}}  \mS_{\w}$. Here are two observations that are fundamental in what follows. 

\begin{lma}\label{relative cone} Let $M$ be a symplectic $4$-manifold. 
\begin{enumerate}[label=(\roman*)]
\item For any symplectic class $u\in\mathcal C_M$ and any $\phi\in \Diff^+(M)$, we have $\mS_{\phi^*u}=\phi_*\mS_u$. 
\item Moreover, if $b^+(M)=1$, then for any pair of symplectic classes $u,u'\in\mathcal C_M$,
\[\mS_{u}\cap \mS_{u'} = \{S\in \mS_u~|~u'[S]>0\} = \{S\in \mS_{u'}~|~u[S]>0\}.\] 
In other words, $S\in\mS_u$ is also in $\mS_{u'}$ whenever $u'$ is pairing positively with~$S$.
\end{enumerate}
\end{lma}
\begin{proof} The first claim follows from $\mS_{\phi^*\w}=\phi_*\mS_{\w}$ and $\Omega_{\phi^*u}=\phi^*\Omega_u$. The second claim is a direct consequence of Theorem 2.7 in~\cite{DoL10}.
\end{proof}

It is clear that  $ \mS_{\w} \subset \mS_{u}$ for any $M$ and $\w\in \mT_{u}$. We will show that the reverse inclusion also holds for rational or ruled surfaces.

\begin{lma}\label{u=w}
Let $M$ be a rational or ruled surface. Then $\mS_{u}=\mS_{\w}$ for all $\w\in \mT_{u}$.
\end{lma}
\begin{proof}
We just need to  show $\mS_u\subset \mS_\w.$  Let $S\in \mS_u$. Then there is a form $\w'\in \mT_u$ and a $\w'$-symplectic sphere $C$ with $S=[C]$. Because the action of $\Diff_u(M)$ on $\mT_u$ is transitive, see~\cite{LM96, LL01}, there is $\phi\in \Diff_u(M)$ so that $\phi^*\w=\w'$. Then the image $\phi(C)$ is a $\w$-symplectic sphere. By Lemma~\ref{homological action}, there is a $\psi \in \Symp(M, \w)$ acting as the inverse of $\phi$ in homology. Clearly, $\psi(\phi(C))$ is an $\w-$symplectic sphere in the  class $[C]$.
\end{proof}

Recall that $\mS_M=\cup_{u\in\mC_M} \mS_u$. Consider its subsets $\mS^{\geq n}_M,\mS^{>n}_M, \mS^{ n}_M, \mS^{ < n}_M, \mS^{ \leq n}_M$ as in Definition \ref{sympu}.  Each class $S\in\mS_M$ defines a hyperplane in $\mC_M$ that cuts the symplectic cone into three regions according to whether $S$ evaluates positively, negatively, or vanishes on symplectic classes. The second statement of Lemma~\ref{relative cone} motivates the following definition of chambers in $\mC_M$. 

\begin{dfn}\label{def:level n chamber}
Let $M$ be a rational or ruled surface. Fix a subset $\mD \subset \mS_{M}^{\geq -n}$. The \emph{level $n$ chamber $\Sigma_{\mD,n}$} of the symplectic cone $\mC_M$ is defined by the system of linear inequalities 
\[
\Sigma_{\mD,n} =\{ u\in \mC_M \,| \, u\cdot A > 0 \ \forall A\in \mD \   \text{and}\  u\cdot B \le 0  \ \forall B \in \mS_{M}^{\geq -n}\setminus \mD  \}
\] 
provided this set is non-empty.
Similarly, the \emph{level $\infty$ chamber $\Sigma_{\mD,\infty}$} is defined~as 
\[\Sigma_{\mD,\infty}=\{ u\in \mC_M\,| \,u\cdot A > 0 \ \forall A\in \mD  \   \text{and}\  u\cdot B \leq 0 \  \forall B\in\mS_{M}\setminus \mD  \}\]
provided this set is non-empty.
\end{dfn}

Next we show that the chambers can be seen as maximal subsets of $\mC_M$ in which the symplectic spherical classes of negative self-intersection do not change. 

\begin{lma}
	Given $n\geq 1$ or $n=\infty$, a level $n$ chamber  can be equivalently defined as a  maximal subset made of classes $u$ in the symplectic cone $\mC_M$ for which the sets of symplectic embedded spheres $\mS_{u}^{\geq -n}$(when $n=\infty$ this becomes $\mS_u$) coincide.
	This is to say, $u$ and $u'$ have the same symplectic embedded spheres classes $\mS_u^{\geq -n}$   and $\mS_{u'}^{\geq -n}$ if and only if the signs of $u(S)$ and $u'(S)$ are the same for all $S\in\mS_M^{\geq -n}$ ($\mS_M$ when $n=\infty$).
\end{lma}
\begin{proof}
	The only if direction is immediate from the coincidence of symplectic embedded spheres $\mS_{u}^{\geq -n}$ in the maximal subset definition. 

	The if direction follows from   \cite[Theorem 2.7]{DoL10}. To give more details, \cite[Theorem 2.7]{DoL10} states that the relative symplectic cone where a class $A\in H_2(M, \ZZ)$ has an embedded representative is given by the linear inequality  $\{ u\in \mC_M| u\cdot A > 0 \},$ for a 4-manifold $M$ with $b^+=1$. In other words, $S$ is $u$-symplectic if and only if $u(S)$ is positive.  Hence if the signs of $u(S)$ and $u'(S)$ are the same for all $S\in\mS_M^{\geq -n}$ then  $\mS_{u'}^{\geq -n}$ coincides with $\mS_{u}^{\geq -n}$.
  
	Here is a direct translation of the two definitions:  we can take $\mD = \mS_{u}^{\geq -n}\subset \mS^{\geq -n}_M$ and consider the set    
 \[\{ u'\in \mC_M| \, u'\cdot A > 0 \ \text{for all} \ A\in \mS_{u}^{\geq -n}\ \text{ and}\  u'\cdot B \leq 0 \  \text{for all} \ B\in \mS_{M}^{\geq -n}\setminus \mS_{u}^{\geq -n}\}.\] 
 This is the maximal subset of $\mC_M$ consisting of classes $u'$ such that the  symplectic embedded sphere classes $\mS_{u'}^{\geq -n}$ coincide with $\mS_{u}^{\geq -n}$.

\end{proof}

Observe that,  the intersection of a chamber with any convex subset of $\mC_M$ is itself convex, because it is defined using linear inequalities. The goal of the next few sections is to characterize level $n$ chambers of rational or ruled surfaces. In particular, we will show that they correspond precisely to the stability chambers mentioned in Theorem~\ref{FinerStabilityThm}.

\begin{prp}\label{-1same} 
Let $M$ be a rational or ruled surface. 
\begin{enumerate}[label=(\roman*)]
    \item If $\mS^{-1}_u=\mS^{-1}_{u'}$ then $K_u=K_{u'}$. 
    \item The level 1 chambers coincide with the $K$-symplectic cones.
\end{enumerate}  
\end{prp}

\begin{proof} We start by proving the first statement using results from~\cite{LL01}. Introduce the set of exceptional classes
\[\mE =\{E\in H_2(M,\ZZ)~|~E\cdot E=-1\text{~and~} E \text{~is represented by a smooth sphere} \},\]
the subset of $K$-exceptional spherical classes
\[\mE_{K}= \{E\in \mE~|~E\cdot K=-1\},\]
and the subset of $\w$-exceptional spherical classes 
\[\mE_{\w}= \{E\in \mE~|~E\text{~is represented by an~}\w\text{-symplectic sphere}\}.\]
By the adjunction formula, $\mE_{\w}\subset \mE_{K_\w}$. It follows from Lemma 3.5 in~\cite{LL01} that  \begin{equation} \label{Esets}
\mE_{\w}=\mE_{K_\w},
\end{equation}
and  Theorem 4 of the same paper yields a characterization of the $K$-symplectic cone as
\begin{equation} \label{Kcone}
\mC_{M,K}= \left\{e\in \mathcal P_M~|~ e \cdot E > 0  \text{~for all~} E\in \mE_K\right\}.
\end{equation}
Notice that  $\mE_{\w}=\mS_{\w}^{-1}$ by definition. So we have $\mE_{K_u}=S^{-1}_u$ by Lemma~\ref{u=w} and equality~\eqref{Esets}. It follows from the characterization~\eqref{Kcone} that $\mC_{M,K_u}$ is determined by $S^{-1}_u$, namely, that $\mC_{M,K_u}=\mC_{M,K_{u'}}$ whenever $\mS^{-1}_u=\mS^{-1}_{u'}$. Therefore, by Proposition 4.1 in~\cite{LL01}, we conclude that $K_u=K_{u'}$.
  
To prove the second statement, consider two classes $u$ and $u'$ belonging to the same level 1 chamber $\mV^1_u\subset\mC_M$. In particular, $\mS^{-1}_u = \mS_{u'}^{-1}$ which, from the previous discussion, implies that $K_u=K_{u'}=:K$. Consequently, $u$ and $u'$ belongs to the same $K$-symplectic cone, showing that $\mV^1_u\subset \mC_{M,K}$. Conversely, if $u$ and $u'$ are both in $\mC_{M,K}$, then $\mS^{-1}_{u}=\mS^{-1}_{u'}=\mE_K$. To show that $\mS^{\geq-1}_{u}=\mS^{\geq-1}_{u'}$, pick any class $S\in\mS_u$ with $S\cdot S\geq 0$. Then, as the three classes $S$, $\PD(u)$, and $\PD(u')$ belong to the same component of the positive cone, the light cone lemma implies that $S\cdot \PD(u)>0$. By Lemma~\ref{relative cone}, it follows that $S\in \mS_{u'}$. Consequently, $\mS^{\geq-1}_{u}=\mS^{\geq-1}_{u'}$. 
We conclude that all classes in $\mC_{M,K}$ belong to the same level 1 chamber, that is, $\mC_{M,K}\subset\mV^1_u$.
\end{proof}

\begin{cor}\label{cor:convexity of chambers}
For $n\geq 1$ or $n=\infty$, the level $n$ chambers are convex subsets of~$\mC_M$.
\end{cor}
\begin{proof}
By definition, each level $n$ chamber belongs to a level $1$ chamber which, by Proposition~\ref{-1same} (ii), is a $K$-symplectic cone $\mC_{M,K}$. { The characterization~\eqref{Kcone} shows that $\mC_{M,K}$ is itself convex. Note that each level $n$ chamber is given by linear inequalities.  Linear regions of a convex set are also convex.  So any level $n$ chamber in $\mC_{M, K}$ is also convex.  }
\end{proof}

Now that we have characterized $1$ chambers, we would like to describe $2$ chambers. The following proposition is the first step in this direction. 

\begin{prp}\label{-2same} 
Let $M$ be a rational or ruled $4$-manifold. If $\mS^{\geq-2}_u=\mS^{\geq-2}_{u'}$ then  $\Diff_u(M)= \Diff_{u'}(M)$. 
\end{prp}
\begin{proof}
By Lemma~\ref{homological action}, we have
\[\Diff_u(M)/\Diff_h(M)=D_{(K_u, u)} \quad \hbox{and} \quad \Diff_{u'}(M)/\Diff_h(M)=D_{(K_{u'}, u')}.\]
So we just need to identify $D_{(K_u, u)}$ and $D_{(K_{u'}, u')}$. For this purpose,  we recall some notions and facts from~\cite{LW12}. Define the set of spherical homology classes
\[\mL=\{L\in H_2(M;\ZZ)~|~  L\cdot L=-2 \hbox{ and $L$ is represented by a smooth sphere}\},\] 
the subset of $K$-null spherical classes
\[\mL_{K}=\{L\in \mL~|~ L\cdot K=0\},\]
and the subset  of $(K, \alpha)-$null spherical classes 
\[\mL_{K, \alpha}= \{L\in \mL_{K}~|~ \alpha\cdot L=0\}\] 
for a class $\alpha\in \mathcal C_{M, K}$.
By Theorem 4.14 in~\cite{LW12} we know that  $D_{(K,\alpha)}$ is generated by the reflections along elements in $\mL_{K, \alpha} $. So it suffices to show that  $\mL_{K_u, u} = \mL_{K_{u'}, u'}$.

By Proposition 5.16 in~\cite{DLW18}, for a symplectic form $\w$ and   $A\in \mL_{K_{\w}}$,  $A\in  \mS^{-2}_\w $ if and only if  $[\w]$ pairs positively with $A$. Since $\mS_{\w}=\mS_{[\w]}$ by Lemma~\ref{u=w}, we have the disjoint union decompositions
\begin{equation}\label{Ldecomposition}
\mL_{K_u}=  \mS^{-2}_u \bigsqcup  \left(-\mS^{-2}_u\right) \bigsqcup \mL_{K_u, u}, \quad  \quad 
\mL_{K_{u'}}=  \mS^{-2}_{u'} \bigsqcup  \left(-\mS^{-2}_{u'} \right) \bigsqcup \mL_{K_{u'}, u'}.
\end{equation}
Our assumptions are  $\mS^{-1}_u=\mS^{-1}_{u'}$ and $\mS^{-2}_u=\mS^{-2}_{u'}$. 
So the first statement in this proposition implies that  $K_{u}=K_{u'}$. Hence  $\mL_{K_u} =\mL_{K_u'}$. Therefore, if in addition $\mS^{-2}_u=\mS^{-2}_{u'}$ then the two decompositions~\eqref{Ldecomposition} imply 
$\mL_{K_u, u} =\mL_{K_u', u'}$. 
 This concludes the proof of Lemma~\ref{-2same}. 
\end{proof}

 \subsection{Level 2 chambers and the normalized reduced symplectic slice}

For manifolds with Euler number $\chi\leq 12$, the level $2$ chambers are best described in terms of reduced symplectic classes and the normalized reduced symplectic slice. As these two notions will also be useful for the proof of Theorem~\ref{FinerStabilityThm}, we briefly recall their definitions here. The interested reader is referred to Section~2.2 in~\cite{LL16} for a more detailed exposition.

Let $M$ be a rational surface not homeomorphic to $S^2\times S^2$. A basis $\{A_1,\ldots,A_{k+1}\}$ of $H_2(M;\ZZ)$ is said to be standard if i) each class $A_i$ is represented by a smoothly embedded sphere, and ii) the intersection pairing is represented, in this basis, by the diagonal matrix $\diag(1,-1,\ldots,-1)$. Note that any identification $M\simeq M_k:= {\CC P^2}\# k\overline{\CC P^2}$ determines a standard homology basis $\{H, E_1, E_2, \cdots, E_{k}\}$. 

Fix a standard basis. A class $\nu H-\sum_{i=1}^k c_i E_i\in H^2(M;\RR)$ is called \emph{reduced with respect to this basis} if
\begin{align*}
\nu&>0,\text{~for~}k=0;\\
\nu&>c_1>0,\text{~for~}k=1;\\
\nu&>c_1+c_2\text{~and~}c_1\geq c_2>0, \text{~for~}k=2;\\
\nu&\geq c_1+c_2+c_3 \text{~and~} c_1\geq c_2 \geq \cdots \geq c_k>0, \text{~for~}k\geq3.
\end{align*}
A reduced symplectic class is a reduced class in $\mC_M$. 
The normalized reduced symplectic slice $P_k=P(M_k)\subset \mC_{M_k}$ is the subspace of reduced symplectic classes with $\nu=1$. We represent such a class by the vector $(1\,|\,c_1, \cdots, c_k)$ or simply by $(c_1, \cdots, c_k)$.

For the product $M_{S^2}=S^2\times S^2$ with the standard basis $\{B=[S^2\times pt], F=[pt \times S^2]\}$, a  class $bB+fF\in \mC_{M_{S^2}}$ is called reduced if $b\geq f\geq 0$. The normalized reduced symplectic slice $P_{S^2}=P(M_{S^2})\subset \mC_{M_{S^2}}$ is the subspace of reduced symplectic classes with $f=1$.

A rational $4$-manifold $M$ together with a choice of a standard basis of $H_2(M,\ZZ)$ is called a \emph{framed surface}. From now on, in order to simplify the exposition, we will often implicitly assume that such a framing is chosen.

We now summarize the properties of the normalized reduced symplectic slice $P(M)$ in the next two propositions. 
\begin{prp} \label{redtran}
Let $M$ be a framed rational surface.  Then
\begin{enumerate}[label=(\roman*)]
\item  A convex combination of (normalized) reduced classes is (normalized) reduced. 
\item A  reduced class is symplectic if and only if it has a positive square.
\item  For a reduced symplectic class $u \in P(M)$, its canonical class   $ K_{u}$ is 
\[K_0:=-3H +\sum^{k}_{i=1} E_i\] 
if  $M=M_k$, and it is $K_0:=-2B-2F $ if $M=M_{S^2}$.
\item  Every class in $\mC_M$ is equivalent to a unique normalized reduced symplectic class under the action of { $\Diff^+(M)\times \RR^+$}, where $\RR^+$ acts by rescaling. In other words, the normalized reduced symplectic slice $P(M)$ is a fundamental domain of $\mC_M$ under the action of { $\Diff^+(M)\times \RR^+$}. Moreover, this action preserves the chamber structure of $\mC_M$.
\end{enumerate}
\end{prp}

\begin{proof}
Part (i) follows directly from the definition, part (ii)  is proved in ~\cite{LiLi02}, part (iii) is proved in~\cite{LL01}, and part (iv)  follows from ~\cite{LL01} and~\cite{GaoHZ} for rational classes, and from~\cite{KK17} for real classes (see also  the Math Review of~\cite{KK17}).
\end{proof}

\begin{dfn}\label{sethomclass} Let $M$ be a framed rational surface and let $K_0$ be the canonical class defined as in Proposition~\ref{redtran}~(iii). We set 
\[\mS_{(K_0)}=\bigcup_{u\in P(M)} \mS_u.\]
\end{dfn}

\begin{prp}\label{nrsc} 
Let $M$ be a framed rational surface with $\chi\leq 12$. 
\begin{enumerate}[label=(\roman*)]
\item The normalized reduced slice $P(M)$ is a convex region in $\RR^{\chi-3}$. Moreover, for $M_k$ with $0\leq k\leq9$, the class $-\frac{1}{3}K_0$ is in the closure of $P(M)$. The same holds for the class $-\frac{1}{2}K_0$ on $S^2\times S^2$.

\item For $M=M_k$, $3\leq k\leq8,$ the set $P(M_k)$ is naturally identified with the polyhedron in $\RR^k$ whose top vertex is $T_k=( \frac{1}{3}, \cdots, \frac{1}{3})$ and whose base is the convex hull of the following $k$ points in the hyperplane $\{c_k=0\}$:
\[G_1=(0,\cdots,0), \ G_2=(1, 0,\cdots, 0), \  G_3=\left(\frac{1}{2}, \frac{1}{2}, 0, \cdots,0\right),\]
\[G_4=\left( \frac{1}{3}, \frac{1}{3}, \frac{1}{3}, 0,\cdots,0\right), \ \cdots ,\ G_k=\left(\frac{1}{3},\cdots, \frac{1}{3}, 0\right).\]
When $k=9$, $P(M_9)$ has a similar description except that the top vertex $T_k=-\frac{1}{3}K_0$ is not in  $P(M_9)$. 
  
\item   When $3\leq k\leq 9$, consider the $K_0$-null spherical classes
\begin{equation}
  l_1= H-E_1-E_2-E_3,  \quad l_2 = E_1-E_2,\quad  \cdots, \quad l_k = E_{k-1}-E_k.  \label{simroot}
\end{equation}
Then the symplectic classes on the edge $T_kG_i$ are characterized by the property of pairing positively with $l_i$ and trivially with $l_j$ for all $j\ne i$. Consequently, the reduced symplectic classes are characterized as the symplectic classes which are positive on each $E_i$ and non-negative on each $l_i$.
 \end{enumerate}
\end{prp}
    
\begin{proof}
 All the statements can essentially be found in  Proposition 2.21 and Section 2.2.5 in~\cite{LL16}. 
The only case not explicitly covered there is $M=M_9$. In this case, the only difference comes from the following observation: for $M=M_9$, a reduced class still has non-negative square, and a normalized reduced class has square 0 if and only if it is $T_9=-\frac{1}{3}K_0$. Hence by parts (i) and (ii) of Proposition~\ref{redtran}, the normalized reduced symplectic slice $P(M_9)$ is convex and contains all the normalized reduced classes except $T_k=-\frac{1}{3}K_0$.
\end{proof}

Finally, we can characterize level 2 chambers in terms of normalized reduced symplectic slices.

\begin{prp}\label{level2}
Let $M$ be a framed rational surface.
\begin{enumerate}[label=(\roman*)]
\item Under the action of { $\Diff^+(M)\times \RR^+$}, every level 2 chamber in $\mC_M$ is equivalent to a unique level 2 chamber inside $P(M)$.

\item If  $\chi(M) \leq 12$, then $P(M)$ is a polyhedron whose open faces   are precisely the level $2$ chambers in $P(M)$. In particular, any level $2$ chamber in $P(M)$ and $\mC_M$ is defined by finitely many linear inequalities. 
\end{enumerate}
\end{prp}
\begin{proof}  {The first statement follows from part (iv) of Proposition~\ref{redtran}.  More precisely, the action of $\Diff^+(M)$ amounts to the Cremona algorithm, which sends any symplectic cohomology class to a unique reduced one; and the $\RR^+$ action is the normalization process.  Notice that both actions preserve the positivity of symplectic area on any given curve class, that is, $u\cdot S= \phi^*u\cdot\phi_*S$. In particular, this holds for the classes of symplectic (-2) spheres. Hence any level $2$ chamber in $\mC_M$ is sent to a unique level 2 chamber inside $P(M)$.}

{The second statement follows from part (iii) of Proposition~\ref{nrsc}, which states that the level 2 chambers and the hyperplane defined by  $\w(E_i)=0$ are the boundaries of $P(M)$. To see this in more detail, we just point out that by definition,  the hyperplane defined by $\w(E_i)=0$ does not belong to $P(M),$ and  $\w(l_i)=0$ does belong to $P(M).$   Hence the open faces of $P(M)$ are exactly level 2 chambers.   We further remark that higher-level chambers may cut a level 2 chamber. But this does not affect the statement here, and it just means that the open faces also have chamber structures.}
\end{proof}

\section{The set \texorpdfstring{$\mS_{(K_0)}^{\leq -3}$}{Sk>=-3} and the level \texorpdfstring{$n$}{n} chambers}\label{section:Level>3Chambers}

In this section we show that for rational surfaces with $\chi(M)\leq12$ the level $n$ chambers, $n\geq 3$ or $n=\infty$, are convex polyhedrons with finitely many facets. We start with a simple observation. 

\begin{lma} \label{linearcut}  Let $M$ be a framed rational surface. Let $n\geq 1$ or $n=\infty$.
\begin{enumerate}[label=(\roman*)]
\item Under the action of {$\Diff^+(M)\times \RR^+$}, every level $n$ chamber in $\mC_M$ is equivalent to a unique level $n$ chamber inside $P(M)$. 
\item If $\chi(M)\leq 12$, then for any  $S\in \mS_{(K_0)}^{\leq -3}$, the hyperplane in $ H^2(M;\RR)$ which vanishes on $S$ cuts the normalized reduced symplectic slice $P(M)$ into two non-empty regions.
 \end{enumerate}
\end{lma}

\begin{proof} The first statement again follows from Proposition~\ref{redtran}. 
Regarding the second statement, for a class $S\in \mS_{(K_0)}^{\leq -3}$, the adjunction formula implies that 
\[ S\cdot (-K_0)=2+S\cdot S \leq -1.\] 
Observe that by Proposition \ref {nrsc}, the class $T_k=-\frac{1}{3}K_0$ is in the closure of $P(M)$. On the other hand, by the definition of  $\mS_{(K_0)}$ (see Definition \ref{sethomclass}), $S\in \mS_u$ for some  $u\in P(M)$. For such a  class $u$,  we must have $u\cdot S>0$. Then the hyperplane in $ H^2(M;\RR)$ which vanishes on $S$ divides $P(M)$ into two non-empty regions: one containing $u$ and one containing points near to $T_k$.
\end{proof}
 
\subsection{Finiteness of the set \texorpdfstring{$\mS_{u}^{\leq -3}$}{Sk<=-3}}

The following finiteness result will be useful. 

\begin{lma}\label{finite}
Let $M$  be a rational surface with  $\chi(M)\leq 12$. Then $\mS_{u}^{\leq -3}$ is a finite set for all $u\in \mathcal C_M$.
\end{lma}

\begin{proof} 
By Proposition~\ref{redtran} and Lemma~\ref{relative cone} we can  assume that $u$ is reduced (and normalized). 

This is easy for $M=S^2\times S^2$.
So we assume that $M={ \CC P^2}\# k\overline{ \CC P^2},  k \leq 9$.
Let \[ S=aH +\sum b_i E_i \in \mS_{u}^{\leq -2}. \]
It follows from  the adjunction formula that 
\begin{equation}\label{adjunction} a^2-3a+2 =\sum b^2_i+\sum b_i.
\end{equation}

We first observe that for a fixed $a$, there are only finitely many choices of vectors $(b_1, ..., b_k)$ satisfying~\eqref{adjunction}. This follows from the fact that 
the non-negative quadratic function $f(x)=x(x+1)$ defined on the integers $\mathbb Z$ is proper, that is, the inverse image of any finite interval is a finite set. In particular, there are only finitely many classes $(a|b_1,\ldots,b_k)$ in $\mS_{u}^{\leq -3}$ with $a=0$. We next consider the two cases $a>0$ and $a<0$ separately.
\begin{itemize}
\item $a> 0$: By the observation above, it suffices to bound $a$ from above.  
Let $c^2=-S\cdot S \geq 3$. We then have
\begin{equation}\label{square}
c^2+ a^2= b^2_1+ \cdots b^2_k.
\end{equation}
We rewrite~\eqref{adjunction} as
\begin{equation}\label{adjunction'}
2 - c^2 -3a = b_1+\cdots +b_k.
\end{equation}
Applying  Cauchy-Schwartz  to~\eqref{adjunction'}, together with ~\eqref{square}, we have
\[(c^2+ 3a - 2)^2 \leq  k(b_1^2+\cdots +b_k^2)= k(c^2+ a^2) \leq  9(c^2+ a^2). \]
This can be written as
\begin{equation}\label{c3a4}
6a(c^2-2)\leq  -(c^4-13 c^2 +4)=-(c^2-\frac{13}{2})^2+ \frac{169}{4}-4\leq 39.
\end{equation}
Since $c^2\geq  3$, equation~~\eqref{c3a4} gives us the bounds $0<a \leq  \frac{39}{6} <7$.

\bigskip
\item $a<0$: In this case, it follows from Lemma 3.4 in~\cite{Chen20} that
\begin{equation}\label{negative H}
\{S \in\mS_{(K_0)}^{\leq -2} ~|~ a< 0\}=\{ -pH +(p+1)E_1+\sum_{j=2 }^kb_j E_{j}, \quad p > 0,\quad b_j=0 \hbox{ or } -1 \}.
\end{equation}

Write 
\[u=H-c_1E_1-\sum_{i=2}^k c_iE_i \]
with $0<c_i<c_1$. 
Then for $p>0, b_j=0$ or $-1$ for $j\geq 2$ and 
\[S=-pH +(p+1)E_1+\sum_{j=2}^k b_jE_{j}\in \mS_{u}^{\leq -2}\subset \mS_{(K_0)}^{\leq -2},\]
since $u\cdot E_j>0$, we have 
\begin{eqnarray} \label{list}  \nonumber u\cdot S &=&u\cdot (-pH+(p+1)E_1)+ u\cdot (\sum_{j=2}^k b_j E_{j})\\ \nonumber
&\leq& u\cdot (-p H+(p +1)E_1)\\ \nonumber
&=&-p+(p+1)c_1 \\ \nonumber 
&\leq&  0 \nonumber 
\end{eqnarray}
if $p$ is sufficiently large.
It follows that $\{S\in \mS_{(K_0)}^{\leq -2}~|~a<0\}$ is a finite set. 
\end{itemize}
\end{proof}

We call a point in $\mC_M$ a \emph{rational point} if every coordinate of the point is rational, otherwise we call it an \emph{irrational point}.

\begin{cor}\label{asr}
Let $M$ be a framed rational surface with $\chi(M) \leq 12$ and let $n\geq 1$ or $n=\infty$.
\begin{enumerate}[label=(\roman*)]
\item Each level $n$ chamber is a convex polyhedron defined by a finite set of linear inequalities. 
\item Except for the monotone chamber which consists of the single point $-\frac13K_0$, each level $n$ chamber has positive dimension. In particular, there are infinitely many (indeed dense) rational points and irrational points.
\end{enumerate}
\end{cor}
\begin{proof}
The first statement follows directly from Proposition~\ref{level2} and Lemma~\ref{finite}, which say, respectively, that each level 2 chamber is a convex region defined by finitely many linear inequalities and that there are only finitely many elements in $\mS_u^{\leq -3}$.  
  
The second statement is a corollary of the first one, since any level $\infty$ chamber is obtained by cutting the interior or a facet of the reduced slice by finitely many hyperplanes.
\end{proof}
 
\begin{rmk}\label{extra}
We mention a couple of facts that are not needed for the proof of Theorem~\ref{FinerStabilityThm}.
The set $\mS_{u}^{-2}$ is finite if $\chi(M)\leq 11$. For classes $(a\,|\,b_1,\ldots,b_k)$ with $a\leq0$ this follows from the proof of Lemma~\ref{finite}.
When $a>0$, we can argue as follows. If $c^2=2$ and  $M=M_k$, $k\leq 8$, then by adding $b_i=0$ for $k+1\leq i\leq 9$, we have
\[-3a=\sum_{i=1}^9 b_i, \quad a^2-\sum_{i=1}^9 b_i^2=-2.\]
Note that $b_9=0$, so we can apply Cauchy-Schwartz to the vectors $(b_1,\hdots,b_9)$ and $(1,\hdots,1,0)$ in order to obtain 
\[9a^2 =  \left(  \sum_{i=1}^9 b_i \right)^2 \leq 8 \sum_{i=1}^9 b_i^2 = 8(a^2+2), \]
which implies that  $a \leq 4$.
  
However, the set $\mS_{(K_0)}^{-2}$ is infinite when $\chi(M)\geq12$. For instance, for $\chi(M)=12$, there are 72 classes of type $\pm(E_i-E_j)$ and 168 classes of type $\pm(H-E_i-E_j-E_k)$, $i,j,k$ distinct. It is not hard to see that adding a multiple of $K_0=3H-E_1-\cdots-E_9$ to any of these classes defines another class of self-intersection $-2$ and pairing $0$ with K that is represented by an embedded sphere. Moreover, all such classes can be obtained this way. Furthermore, these classes generate an infinite reflection subgroup $\Aut(H_2,K_0)\subset\Aut(H_2(M_9,\ZZ))$ that acts transitively on the infinite sets $S^{-1}_{(K_0)}$ and $S^{-2}_{(K_0)}$. 
{
It is easy to obtain infinitely many  $(-3)$ spheres, i.e.  $\mS_{(K_0)}^{-3}$ by blowing up the infinitely many  $(-2)$ spheres.  It immediately follows that  $\mS_{(K_0)}^{-3}$  and  $\mS_{(K_0)}^{\le -3}$ are infinite sets when $\chi(M)>12$.  See \cite[proof of Proposition 1]{Dem} for a complete discussion.}

\end{rmk}

\subsection{The level \texorpdfstring{$\infty$}{infinity} chambers for small Euler numbers}\label{section:ChambersSmallEulerNumbers}
We explicitly describe the reduced slice and the level $\infty$ stability chambers for rational symplectic manifolds of Euler numbers $\chi \leq 5$.

\subsubsection{$\CP{2}$}
For the complex projective space $\CP{2}$, the symplectic cone is the ray $\lambda H$, $\lambda>0$, and the normalized reduced
symplectic slice consists of a single point $\{H\}$, so that the cell decomposition is trivial. 

\subsubsection{$S^2\times S^2$} 
For the trivial bundle $S^2\times S^2$, the symplectic cone is the positive quadrant $\mu B+ \nu F$, $\mu,\nu>0$, and the reduced
symplectic slice consists of all classes with $\mu\geq1$ and $\nu=1$. For $u=\mu B+F$, let's write $\mu=\ell+\lambda$ with $\ell$ a nonnegative integer (possibly zero), and $\lambda\in(0,1]$. The set $\mS_{u}^{\leq 1}$ consists of all classes $B-kF$ with $1\leq k\leq\ell$. Note that all negative spherical classes have even self-intersection. For any finite $2k\geq 2$, the level $2k$ chambers are the intervals $[1,k]$ and $(k,\infty)$. The level $\infty$ stability chambers are the half-open intervals $(n,n+1]$, $n\geq 1$, which coincides with the stability regions described in~\cite{AM99}.

\subsubsection{$M_1$} 
For the non-trivial bundle $\CC P^2\#\overline{\CC P^2}\to \CP{1}$, the symplectic cone is made of classes $\alpha H + \beta E$ with $\alpha>\beta>0$. Let $B=E$ and let $F=H-E$ be the class of a fiber. The reduced symplectic slice can be identified with the set of classes of the form $\mu B+F$ with $\mu>0$. As before, given a class $u=\mu B+F$, let's write $\mu=\ell+\lambda$ with $\ell$ a nonnegative integer (possibly zero), and $\lambda\in(0,1]$. The set $\mS_{u}^{\leq 1}$ consists of all classes $B-kF$ with $0\leq k\leq\ell$. This time, all negative spherical classes have odd self-intersection. For any finite $2k+1\geq 1$, the level $2k+1$ chambers are the intervals $(0,k]$ and $(k,\infty)$. The level $\infty$ stability chambers are the half-open intervals $(n,n+1]$, $n\geq 0$, which again coincides with the stability regions described in~\cite{AM99}.

\subsubsection{$M_2$}
In this case, the level $\infty$ chambers can be read off from  Theorems 1.1 and 1.6 in~\cite{LP04}, where the homotopy type of $\Symp(M_2)$ is determined. The reduced symplectic slice can be identified with the set of classes of the form $H-c_1E_1-c_2E_2$ with $0<c_2\leq c_1<c_1+c_2<1$. We illustrate it in Figure~\ref{fig:2bl}. Note that the interior walls in the picture, that is, the lines in the interior of the triangle, come in pairs as they are determined by existence of embedded symplectic spheres in classes
\[ kE_1-(k-1)H  \quad\text{and}\quad kE_1-(k-1)H-E_2 \]
with $k >1$, and which are examples of classes considered in Lemma~\ref{finite}. They divide the reduced slice into level $\infty$ chambers.

\begin{figure}[ht]
\begin{center}
\begin{tikzpicture}
[
scale=8,
axis/.style={->,thick},
vector/.style={-stealth,black,very thick},
]
\coordinate (O) at (0,0,0);(-0.1,0,0)node{$O$};
\draw[axis] (0,0,0) -- (1,0,0) node[anchor=north east]{$c_1$};
\draw[axis] (0,0,0) -- (0,.5,0) node[anchor=north west]{$c_2$};
\draw[vector]  (0.4,0.4,0)--(0,0,0) node[anchor=north ]{$c_1=c_2$};
\draw[dashed,black] (0.4,0.4,0)--(0.8,0,0);
\draw[dashed,white] (0,0,0)--(0.8,0,0);
\node[draw] at(0.8,-0.05,0){$A:(1,0)$};
\node[draw] at(0.4,0.45,0){$B:(\frac12,\frac12)$};
\node[circle, fill, inner sep=0.2pt] at(0.53,0.27,0){1};
\node at(0.71,0.13,0)  at(0.71,0.13,0){$\ddots$};
\node[circle, fill, inner sep=0.2pt] at(0.4,0,0){1};
\node[draw] at(0.4,-0.05,0) {$1/2$};
\node[circle, fill, inner sep=0.2pt] at(0.61,0.19,0){1};
\node[draw] at(0.53,-0.05,0) {$2/3$};
\draw[black] (0.61,0.19,0)--(0.53,0,0);
\node[circle, fill, inner sep=0.2pt] at(0.53,0,0){1};
\node[draw] at(0.53,-0.05,0) {$2/3$};
\node at(0.69,-0.03,0) {$\cdots$};
\draw[black] (0.4,0.4,0)--(0.4,0,0);
\draw[black] (0.4,0,0)--(0.53,0.27,0);
\draw[black] (0.53,0.27,0)--(0.53,0,0);
\end{tikzpicture}
\end{center}
\caption{Stable chambers of the 2-point blowup of $\CC P^2$}
\label{fig:2bl}
\end{figure}

\subsubsection{$M_3$}
In this case, the homotopy groups of $\Symp(M, \w)$ are calculated  in \cite[Proposition 3.3]{AP13}. The level $\infty$ chambers can be read off and agree with part (ii) of Theorem~\ref{FinerStabilityThm} in this case. 
 
The polytope $OAM_2 M_3$ depicted in Figure~\ref{fig:3bl} is the reduced symplectic slice of $\CC P^2\# 3 \overline{ \CC P^2}$. It consists in classes $H-c_1E_1-c_2E_2-c_3E_3$ such that $0<c_3\leq c_2\leq c_1<c_1+c_2+c_3\leq 1$. The hyperplanes in the interior of the cone are the walls defined by the curves in classes $S \in \mS_{(K_0)}^{\leq -3}$, that is, the hyperplane in $H^2(M;\RR)$ which vanishes on a specific $S$ corresponds to a wall in $P(M)$. 
\begin{figure}[ht]
  \centering
  \includegraphics[scale=0.8]{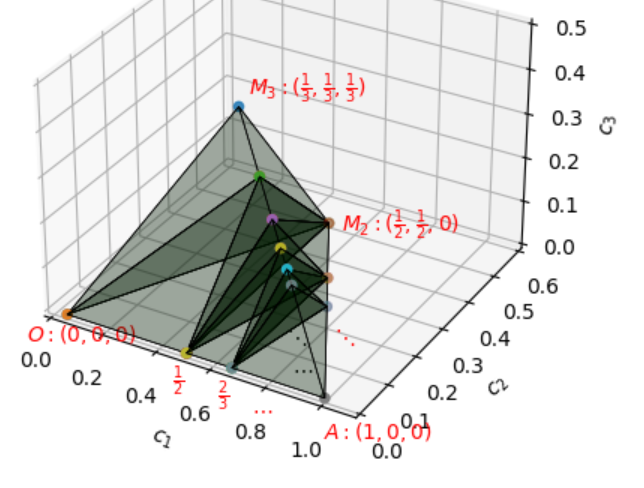}
  \caption{Stable chambers of the 3-point blowup of $\CC P^2$}
  \label{fig:3bl}
\end{figure}
Here we list the classes of the interior walls: the first wall from the left is defined by the sign of the class $E_1-E_2-E_3$; then as one moves right toward point $A$, for each integer $k>1$ there's a pattern of 4 walls determined by classes of the forms
\begin{align*}
kE_1-(k-1)H, &\quad  kE_1-(k-1)H-E_3, \\ kE_1-(k-1)H-E_2, &\quad kE_1-(k-1)H-E_2-E_3.
\end{align*}
They never intersect with each other except on the edges.

\section{Decomposition of \texorpdfstring{$\mA_u$}{Au}, Almost K\"ahler cone, and \texorpdfstring{$J$-inflation}{J-inflation}}\label{section:PartitionAlmostComplexStructures}

In order to relate the cell decomposition of the symplectic cone $\mC_M$ to the homotopy type of symplectomorphism groups, we  now turn our attention to spaces of almost complex structures. 

\subsection{The homotopy fibration \texorpdfstring{$\Symp(M, \w)\to \Diff_u(M) \to \mA_{u}$}{}}\label{stra}

It was first observed by D. McDuff in~\cite{McD01} that there is a homotopy equivalence between the space $\mT_{\w}$ of symplectic forms that are isotopic to a given symplectic form $\w$, and the space $\mathcal A_\omega$ of almost complex structures tamed by (or compatible with) at least one form in $\mT_{\w}$. This simple fact, together with the symplectic $J$-inflation procedure, underlie all known stability results on the homotopy type of symplectomorphism groups of $4$-manifolds. 

It turns out that for rational or ruled surfaces, the homotopy equivalence between $\mT_{\w}$ and $\mathcal A_\omega$ holds, more generally, for the spaces of forms and almost-complex structures associated to a symplectic class $u\in \mathcal C_M$. Let $\mT_{u}$ be the space of symplectic forms cohomologous to $u$, and let $\mA_{u}$ be the space of almost complex structures $J$ for which there exists some $\w\in \mT_{u}$ compatible with $J$, that is,
\[\mA_{u} = \{J~|~\exists~ \w\in\mT_{u},~J\text{~is compatible with~}\w\}\]
Both  $\mT_{u}$ and $\mA_{u}$ are infinite-dimensional Fr\'echet manifolds. 

\begin{lma}\label{convexfiber}
Let $M$ be a rational or ruled surface and let $u\in\mC_M$. The spaces $\mT_{u}$ and $\mA_{u}$ are homotopy equivalent. In particular, there is a canonical bijection between the sets of path-connected components of $\mT_{u}$ and $\mA_{u}$. Moreover, $\Diff_u(M)/\Diff_0(M)$ acts transitively on the sets of path connected components of $\mT_{u}$ and $\mA_{u}$.
\end{lma}
\begin{proof}
Consider the space $P_{u}$ of pairs
\[ P_{u} = \{(\w, J) \in \mT_{u} \times \mA_{u}| \, \w \ \text{is compatible with} \ J \}.\]
Since the projection  $\alpha_u:P_{u} \to  \mA_{u}$ is a fibration whose fiber at $J$ is the convex set of $J$-compatible symplectic forms, the projection is a homotopy equivalence. Likewise, the projection $\beta_u: P_{u} \to  \mT_{u}$ is also a homotopy equivalence since it is a fibration whose fiber at $\w$ is the contractible space of $\w$-compatible almost complex structures. Let $\gamma_u: \mT_{u}\to P_u$ be a homotopy inverse of $\beta_u$. Then $\alpha_u\circ \gamma_u$ is the desired homotopy equivalence between $\mT_{u}$ and $\mA_{u}$.

Of course, $\alpha_u\circ \gamma_u$ also induces a canonical bijection between the sets of connected components of $\mT_{u}$ and $\mA_{u}$. Recall that for rational or ruled surfaces, any two cohomologous symplectic forms are diffeomorphic. Hence, $\Diff_u(M)$ acts transitively on $\mT_{u}$, and there is a  transitive action of $\Diff_u(M)/\Diff_0(M)$ on the sets of path connected components of $\mT_{u}$ and $\mA_{u}$.
\end{proof}

The group $\Diff_u$ acts on the three spaces $P_u$, $\Omega_u$, and $\mA_u$. Clearly, the two projections $\alpha_u:P_u\to\mA_u$ and $\beta_u:P_u\to\Omega_u$ are equivariant. Although the evaluation map $\Diff_u(M)\to P_u$ is never a fibration (because compatible pairs $(\w,J)$ have local invariants such as the curvature of the associated metric and the rank of the Nijenhuis tensor $N(J)$), we can consider its homotopy fiber $\mF_{P_u}$ defined through the fibrant replacement\footnote{For a nice introduction to fibrant replacements and homotopy commuting diagrams, see~\cite{Sel97} Section 7.6.}
\[\mF_{P_u}\to\overline{\Diff_u(M)}\to P_u.\]
Composing this fibration with the two projections $\alpha_u,~\beta_u$, we obtain a commutative diagram of fibrations and homotopy equivalences
\[
\begin{tikzcd}[row sep=small]
\Symp(M,u)\arrow[r]\arrow[d] & \Diff_u(M) \arrow[d,hookrightarrow,"\simeq"]\arrow[r] & \Omega_u \\
\mF_{\Omega_u} \arrow[r]               & \overline{\Diff_u(M)} \arrow[r]& \Omega_u \arrow[u,equal]\\ 
\mF_{P_u} \arrow[r]\arrow[u] \arrow[d] & \overline{\Diff_u(M)} \arrow[d, equal]\arrow[u,equal]\arrow[r] & P_u \arrow[d,"~\alpha_u"]\arrow[u,"~\beta_u" right]\\ 
\mF_{\mA_u}\arrow[r] & \overline{\Diff_{u}(M)} \arrow[r]      & \mA_{u}\\
 & \Diff_u(M) \arrow[u,hookrightarrow,"\simeq"]\arrow[ru] & 
\end{tikzcd}
\]
Then, the 5-lemma applied to the induced ladder of long exact sequences implies that the homotopy fiber $\mF_{\mA_u}$ of the evaluation map $\Diff_u(M)\to\mA_u$ is weakly homotopy equivalent to $\Symp(M,u)$.  {Note that the compatibility of the action of $\Diff_u(M)$ on $\mT_u$ is proved at the end of the proof of Lemma \ref{convexfiber}. } The long exact homotopy sequence associated to the maps
\begin{equation} \label{homotopy fibration}
\Symp(M, u)\into \Diff_u(M) \to \mA_{u}.
\end{equation}
will be investigated through a decomposition of $\mA_{u}$ arising from negative spherical classes in $\mS_{u}^{\leq-2}$.

\subsection{A partition of \texorpdfstring{$\mA_{u}$}{Lg} along spherical classes \texorpdfstring{in $ \mS_{u}^{\leq-2}$}{}}
In~\cite{LL16}, J. Li and T. J. Li introduced a partition of $\mJ_{\omega}$ into submanifolds characterized by the existence of embedded $J$-holomorphic spheres of self-intersection at most $-2$. We introduce an analogous decomposition for $\mA_{u}$.

For each $A\in  \mS_{u}^{\leq -2}$ we associate { the positive} even integer
\begin{equation}\label{codimension}
{cod_A}=2(-A\cdot A-1).
\end{equation}

\begin{dfn}\label{def:admissible set}
Let $u$ be a symplectic class. Given a finite subset $\mD\subset \mS_{u}^{\leq -2}$,
\[\mD=\{A_1,\cdots ,A_n ~|~  A_i\cdot A_j \geq 0  \text{~if~}  i\neq j\},\]
define the codimension of the  set ${\mD}$ as $cod({\mD})= \sum_{A_i\in \mD } cod_{A_i}$. We call such a set $\mD$ an \emph{admissible subset of $S_{u}^{\leq -2}$}. 
{In the case ${\mD}=\emptyset$ we define $cod(\emptyset)=0$.  }

\end{dfn}

Notice that $cod({\mD})$ is a non-negative integer and $cod(\mD)\leq cod(\mD')$ whenever $\mD\subset \mD'$.

\begin{dfn}\label{def:prime subset}
Given an admissible set $\mD\subset\mS_{u}^{\leq -2}$ as above, we define the associated subset  $\mA_{u,\mD}\subset\mA_u$ as
\begin{multline}
\mA_{u, \mD}=\{ J\in \mA_{u}~|~\text{a class~} A\in \mS_u^{\leq -2}  \text{~has an embedded~} J\text{-holomorphic}\\ \text{representative \emph{if, and only if,~}} A\in \mD\}.
\end{multline}
whose  $cod (\mA_{u, \mD})$ is defined to be equal to $cod (\mD)$.
\end{dfn}

Note that $J\in \mA_{u, \mD}$ if, and only if, the admissible set $\mD$ is precisely the set of all spherical classes $A\in\mS_u^{\leq -2}$ that admit embedded $J$-holomorphic representatives. It follows that we have the disjoint union decomposition
\[\mA_{u} =\bigsqcup_{\text{admissible~}\mD} \mA_{u, \mD}\]
Note also that the prime subset $\mA_{u,\emptyset}$ associated to the empty set consists of compatible almost-complex structures for which there are no embedded $J$-holomorphic spheres $S$ of self-intersection $S\cdot S \leq -2$. It follows from Gromov-Witten theory that this set is always open.
\begin{rmk}
It may seem more natural to include exceptional classes $S\in\mS^{-1}_u$ in the definition of prime subsets. However, as these classes are generic in the sense of Gromov-Witten theory, the sets of almost-complex structures for which a collection $\mD\subset\mS^{-1}_u$ is represented by $J$-holomorphic embedded spheres is always open.
\end{rmk}
We introduce a filtration $\{ \mA_{u}^2\subset\cdots\subset\mA_{u}^{2n}\subset \mA_{u}^{2n+2}\subset\cdots \}$ of $\mA_u$,
where
\begin{equation}\label{Adec}
\mA_{u}^{2n}=\bigsqcup_{\substack{\text{admissible~}\mD\\cod(\mD)<2n}} \mA_{u, \mD}.
\end{equation}

{Note that $ \mA_{u}^2 $ is just  $ \mA_{u,\emptyset}$.}

Let $\mX_{u,2n}$ denote  the  complement of $\mA_{u}^{2n}$. Clearly, it can also be written as a \emph{disjoint} union
\begin{equation}
\mX_{u,2n}=\bigsqcup_{\substack{\text{admissible~}\mD\\ \cod(\mD)\geq 2n}} \mA_{u, \mD}.\end{equation}
It is actually useful to express $\mX_{u,2n}$ as the following  (not necessarily disjoint) union
\begin{equation}\label{Udec}
\mX_{u,2n}=\bigcup_{\substack{\text{admissible~}\mD\\ cod(\mD)\geq 2n}} U_{u, \mD},
\end{equation}
where
\[ U_{u, \mD}=\{ J\in \mA_{u}~|~  A\in \mS_u \text{~has an embedded~} J\text{-hol representative {\bf whenever}~}  A\in \mD\}.\]
Clearly, $\mA_{u, \mD} \subset U_{u, \mD} $ and
\[U_{u, \mD}= \bigsqcup_{\substack{\text{admissible~}\mD'\\ \mD\subset \mD'}}\mA_{u, \mD'}.\]

We have the following analogue of \cite[Proposition 7.1]{AP13} whose proof is the same mutatis mutandis, the only difference being that we consider the larger space $\mA_{u}$ instead of $\mJ_{\omega}$. 

\begin{prp}\label{p:stratum}
Each subset  $U_{u,\mD}$ is a co-oriented Fr\'echet submanifold of $\mA_{u}$ of (real) codimension $cod(\mD)$. 
It follows that $\mX_{u, 2n}$ is the union of submanifolds of codimension at least $2n$. \qed
\end{prp}

\begin{rmk}
The analogue of Proposition 2.14 in~\cite{LL16} is also valid for $\mA_{u, \mD}$, that is, if $\chi(M) \leq 12$ then $\mA_{u, \mD}$ is a  submanifold of $\mA_u$ with  codimension $cod({\mD})$. The proof is similar.
\end{rmk}

\subsection{The almost K\"ahler cone}
Recall the two notions of $J$-symplectic cones, the $J$-tame cone and the $J$-compatible cone:
\begin{align}\label{J symplectic cones}
\mathcal K_J^{t}&=\{[\omega]\in H^2(M;\RR)~|~\omega \text{~tames~} J\},\\
\mathcal K_J^{c}&=\{[\omega]\in H^2(M;\RR)~|~\omega \text{~is compatible with~} J\}.\notag
\end{align}

$\mathcal K_J^{c}$ is also called the almost K\"ahler cone.
Both $\mathcal K_J^c$ and $\mathcal K_J^t$ are convex cohomology cones contained in the positive cone $\mathcal P_M=\{e\in H^2(M;\RR)~|~e\cdot e >0\}.$ We say that an almost complex structure $J$ is almost K\"ahler if $\mathcal K_J^c \neq 0$. Clearly,  $\mathcal K_J^c\subset \mathcal K_J^t$.  And for an almost K\"ahler $J$ on a 4-manifold with $b^+=1$, they are equal.
\begin{thm}[Corollary 1.1 in \cite{LZ09}]\label{c=t}
Let $(M,J)$ be an almost complex 4-manifold.  If $b^+(M)=1$ and $\mathcal K_J^c\ne \emptyset$, then $\mathcal K_J^c=\mathcal K_J^t$.
\end{thm}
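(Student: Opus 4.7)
The plan is to combine the Draghici--Li--Zhang (DLZ) cohomological decomposition $H^{2}(M;\mathbb R)=H^{+}_{J}\oplus H^{-}_{J}$, into classes with $J$-invariant and $J$-anti-invariant closed representatives, with Hodge theory on an almost K\"ahler metric, and then close the gap between $\mathcal K_{J}^{c}$ and $\mathcal K_{J}^{t}$ by an open-closed argument along a cohomology segment.

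First I would establish that $H^{-}_{J}=0$ in this setting. Fix $\omega_{0}\in\mathcal K_{J}^{c}$ and set $g_{0}=\omega_{0}(\cdot,J\cdot)$. Hodge theory splits the $g_{0}$-harmonic 2-forms into self-dual and anti-self-dual pieces of dimensions $b^{+}=1$ and $b^{-}$. Since $\omega_{0}$ is closed and self-dual, the entire self-dual harmonic space equals $\mathbb R\omega_{0}$, which is $J$-invariant. On an almost K\"ahler 4-manifold, every anti-self-dual 2-form is primitive of type $(1,1)$, hence $J$-invariant as well. Therefore every harmonic 2-form is $J$-invariant; combined with DLZ, this forces $H^{+}_{J}=H^{2}(M;\mathbb R)$ and $H^{-}_{J}=0$. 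In particular, every cohomology class has a closed $(1,1)$-representative, and $\mathcal K_{J}^{c}$ is open in $H^{2}(M;\mathbb R)$, since adding to a compatible $\psi$ a small multiple of any closed $(1,1)$-form preserves both $J$-invariance and pointwise positivity.

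Now for any $[\omega]\in\mathcal K_{J}^{t}$, the segment $\gamma(s)=(1-s)[\omega_{0}]+s[\omega]$, $s\in[0,1]$, lies in $\mathcal K_{J}^{t}$ by convexity. I would prove that $S=\{s\in[0,1]:\gamma(s)\in\mathcal K_{J}^{c}\}$ is all of $[0,1]$ by open-closedness. Openness of $S$ follows from the openness of $\mathcal K_{J}^{c}$ just established. The main obstacle is the closedness of $S$: given $s_{n}\to s^{*}$ with compatible forms $\psi_{n}$ representing $\gamma(s_{n})$, one must construct a compatible representative of $\gamma(s^{*})$. My plan is to analyze the $g_{0}$-harmonic representatives $h_{n}=c_{n}\omega_{0}+h_{n}^{-}$ of $\gamma(s_{n})$, where $c_{n}=\gamma(s_{n})\cdot[\omega_{0}]/[\omega_{0}]^{2}>0$ tends to $c^{*}=\gamma(s^{*})\cdot[\omega_{0}]/[\omega_{0}]^{2}>0$ and $h_{n}^{-}$ is the anti-self-dual harmonic $(1,1)$-part, pass to the cohomological limit $h^{*}=c^{*}\omega_{0}+(h^{*})^{-}$, and then modify $h^{*}$ by an exact $(1,1)$-form using the existence of a tame representative of $\gamma(s^{*})$ to restore pointwise positivity. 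The rigidity of the almost K\"ahler setup with $b^{+}=1$, namely the one-dimensionality of the self-dual harmonic space, is essential here to rule out degeneration of the positivity and thereby confirm that the limit is a genuine compatible symplectic form.
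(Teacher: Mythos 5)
First, note that the paper you are reading does not prove this statement at all: it is quoted verbatim as Theorem~1.3 of \cite{LZ09}, so the relevant comparison is with Li--Zhang's actual argument. Your first step is correct and is in fact the same observation that underlies the $b^+=1$ case there: for an almost K\"ahler pair $(\omega_0,J)$ the form $\omega_0$ is closed and $g_0$-self-dual, hence harmonic, so $\mathcal H^+_{g_0}=\mathbb R\,\omega_0$ when $b^+=1$; anti-self-dual $2$-forms are primitive $(1,1)$, so every $g_0$-harmonic $2$-form is $J$-invariant, giving $H_J^+=H^2(M;\mathbb R)$, $H_J^-=0$, and openness of $\mathcal K_J^c$ in $H^2(M;\mathbb R)$. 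Up to this point your proposal reproduces the ``easy half'' of the Li--Zhang proof.

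The genuine gap is the closedness step of your open--closed argument, and it is exactly where the whole content of the theorem sits. The $g_0$-harmonic representative $h^*$ of $\gamma(s^*)$ is $J$-invariant but has no reason to be pointwise positive on $J$-lines, and the compatible forms $\psi_n$ representing $\gamma(s_n)$ come with no uniform bounds: their positivity moduli can shrink to zero as $s_n\to s^*$, so no limiting compatible form is produced, and appealing to ``rigidity'' of the $b^+=1$ situation is not a mechanism. Worse, the phrase ``modify $h^*$ by an exact $(1,1)$-form using the existence of a tame representative of $\gamma(s^*)$ to restore pointwise positivity'' is circular: producing a closed, $J$-invariant, pointwise positive form in the class $\gamma(s^*)$ \emph{is} the assertion $\gamma(s^*)\in\mathcal K_J^c$ that you are trying to prove. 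A taming form $\omega^*$ is positive on $J$-lines, but its $J$-invariant part is not closed, and turning it into a closed compatible representative requires solving an equation for the exact correction (this is precisely the Donaldson--Taubes type difficulty), not a soft limiting argument; note also that in the integrable setting the analogous ``pass to the boundary of the cone'' step genuinely fails, which shows that some use of the taming hypothesis beyond convexity is unavoidable. What fills this hole in \cite{LZ09} is their Theorem~1.1, the comparison $\mathcal K_J^t=\mathcal K_J^c+H_J^{-}$ valid whenever $\mathcal K_J^c\neq\emptyset$ on a $4$-manifold, whose hard inclusion is proved by a genuine cohomological/analytic argument; combined with your (correct) computation $H_J^-=0$ when $b^+=1$, it yields $\mathcal K_J^c=\mathcal K_J^t$. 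As written, your proposal replaces that key theorem with an unsupported compactness claim, so it does not constitute a proof.
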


{Let $C_J(M)$ be the curve cone of $(M, J)$ where $J$ is  almost
K\"ahler, i.e. $\mathcal K_J^c\ne \emptyset$ as in Theorem \ref{c=t}:}
\[C_J (M) = \left\{\sum a_i[C_i]~|~a_i>0,~  C_i \text{~is an irreducible~}J\text{-holomorphic subvariety} \right\}.\]
Here an irreducible $J$-holomorphic subvariety is the image of a $J$-holomorphic map $\phi:\Sigma \to M$ from a complex connected curve $\Sigma$, where $\phi$ is an embedding off a finite set. Let $\overline{C}_J(M)$ be the closure of $C_J(M)$. 

Let $\overline{C}_J^{\vee, >0}(M)$ be the positive dual of $\overline{C}_J(M)$ under the homology-cohomology pairing. Clearly, $\mathcal K_J^t\subset \overline{C}_J^{\vee, >0}(M)$ since the integral of a $J$-tamed symplectic form over a $J$-holomorphic subvariety is positive. Motivated by the famous Nakai-Moishezon-Kleiman criterion in algebraic geometry which characterizes the ample cone in terms of the (closure of) curve cone for a projective $J$, and the recent K\"ahler version\footnote{Established by Buchdahl~\cite{Buch99} and Lamari~\cite{Lam99} in dimension 4, and by Demailly-Paun~\cite{DP04} in arbitrary dimension.} of the Nakai-Moishezon criterion (in dimension $4$), which characterizes the K\"ahler cone in terms of the curve cone for a K\"ahler $J$, one asks whether there is a tamed/almost K\"ahler version of the Nakai-Moishezon criterion.
Such a theorem was proven in~\cite{Zha17} for rational surfaces with $\chi(M)\leq 12$.

\begin{thm}[Theorem 1.6 in~\cite{Zha17}]\label{ccinf}
Let $M$ be a rational surface with $\chi(M) \leq 12$.
For an almost K\"ahler structure $J$ on $M$, the positive dual of the closure of the curve cone is  the almost K\"ahler  cone, i.e. 
\[\overline{C}_{J}^{\vee,>0}(M)=K_{J}^{c}(M).\]
\end{thm}

\subsection{A remark on applying \texorpdfstring{J}{J}-inflation to 4-manifolds with \texorpdfstring{$b^+=1$}{Lg}}\label{tcinf}
An important ingredient for Theorem~\ref{ccinf} is the tamed $J$-inflation by  McDuff~\cite{McD01} and Buse~\cite{Buse11}. Note that the proofs make the unwarranted assumption that for every $\w$-tame $J$ and every $J$-holomorphic curve $Z$ one can find a family of normal planes that is both $J$ invariant and $\w$-orthogonal to $TZ$. This is true only if $\w$ is compatible with $J$ at every point of $Z$. We state here a weaker version of \cite{McD01}~Lemma 3.1 and \cite{Buse11}~Theorem 1.1 that includes this extra hypothesis.

\begin{thm}\label{weaker version}
Given a compatible pair  $(J,\w)$ on $M^4$, one can inflate along a $J$-holomorphic curve $Z$, so that there exists a symplectic form $\w'$ taming  $J$ in the cohomology class
\[ [\w']= [\w]+ t PD(Z),~ t\in [0,\lambda)\] 
where $\lambda= \infty$ if $Z\cdot Z\ge0$ and $\lambda= \frac{\w(Z)}{(-Z\cdot Z)}$ if $Z\cdot Z<0$.
\end{thm}

Once we have this weaker version of inflation,   Theorem \ref{c=t} grants that there is a symplectic form $\w''$ in the same cohomology class of $[\w']$ that is compatible with $J$, and we immediately obtain Lemma~\ref{Weak Inflation b1+} that we reproduce here for convenience.

\begin{lma*}[Weak $b^+=1$ $J-$compatible inflation]
Let $M$ be a symplectic $4$-manifold with $b^+=1$. Given a compatible pair $(J,\w)$ and a $J$-holomorphic embedded curve $Z$, there exists a symplectic form $\w'$ compatible with $J$ such that $[\w']= [\w]+ t PD(Z)$, $t\in [0,\lambda)$ where $\lambda= \infty$ if $Z\cdot Z\ge0$ and $\lambda= \frac{\w(Z)}{(-Z\cdot Z)}$ if $Z\cdot Z<0$.
\end{lma*}

This $J$-compatible inflation for $b^+(M)=1$ is sufficient for the proof of Theorem~\ref{ccinf}. It is also sufficient for all  the known  stability  results of $\Symp(M, \w)$ when $b^+(M)=1$ (see Section~\ref{compwise}).
 
\begin{rmk}\label{rmk:tamed inflation}
Recently, Chakravarthy and  Pinsonnault (\cite{CP2019}) were able to restore the tamed version of $J$-inflation as originally stated in~\cite{McD01} and~\cite{Buse11} for curves of non-positive self-intersection. This is also sufficient for the proof of Theorem~\ref{ccinf}. At the time of writing, it is still unclear whether the tamed $J$-inflation process along positive curves can be fully restored.

For the readers' convenience, we give a sketch of the proof of Theorem~\ref{weaker version}. We first recall that the framework of the proof of  \cite{Buse11}~Theorem 1.1 (the proof of~\cite{McD01}~Lemma 3.1 has exactly the same structure):  first take the symplectic normal bundle of $Z$,  then at $p\in Z$ one can take the tangent spaces $V_1$ and the normal complement $V_2$ in $T(M^4)$. One can find a form $\w'$  in the cohomology class
$[\w']= [\w]+ t PD(Z),~ t\in [0,\lambda)$ by adding up $\w|_Z$ and the Thom form of the symplectic normal bundle.  The key is then to show that $\w'$ tames the given $J$.  In particular, to show that at $p$ on $V_1, V_2$, $J$ is  a block matrix  $J_p = \begin{pmatrix} 
A & B \\
C & D 
\end{pmatrix}$, which allows to estimate $\w(-, J_p(-))$   using the local form.  If one had $B=C=0$, then all the  estimates in (2.7)- (2.11) of \cite{Buse11} (or equivalent parts of \cite{McD01}) are all valid.  However, only assuming that $(J, \w)$ is a tame  pair (as in \cite{Buse11} and \cite{McD01})  cannot guarantee this. One needs the stronger assumption that  $(J, \w)$ is a compatible pair as in Theorem \ref{weaker version}.  Then the fact that  $(J,\w')$ is a tame pair follows from the estimates in (2.7)- (2.11) of \cite{Buse11}.
\end{rmk}

\section{Stability of \texorpdfstring{$\Symp(X,\w)$}{Lg}}
In this section, we prove the second part of Theorem~\ref{FinerStabilityThm}. We also outline an alternative approach for its proof that uses the Moser fibration. Finally, we relate the topology of $\Symp(M,\w) $ with the topology of the space of symplectically embedded balls.

Note that from the description of the chambers in terms of symplectic spherical classes of negative self-intersection, Part (ii) of Theorem~\ref{FinerStabilityThm}  can be restated as 
 
\begin{thm}\label{main-u version} 
Let $M$ be a rational surface with $\chi(M)\leq 12$ and  $u, u'\in \mathcal C_M$. If $\mS_{u}^{\geq -n}=\mS_{u'}^{\geq -n}$ for $n\geq 2$, then $\pi_i\Symp(M, u)=\pi_i\Symp(M, u')$ for $0\leq i\leq  2n-3$. In particular, if $\mS_{u}=\mS_{u'}$, then $\pi_i\Symp(M, u)=\pi_i\Symp(M, u')$ for all $i\geq 0$.
\end{thm}

We will prove this version of the theorem.

\subsection{Proof of Theorem~\ref{main-u version}}\label{section:ProofMainTheoremSecondVersion}
{
First note that for two symplectic forms $\w_1$ and $\w_2$ in the same cohomology class $u$ of a rational surface $M$, $\w_1$ and $\w_2$ are diffeomorphic by \cite{McD98}.  From Moser's argument we know that $Symp(M,\w_1)$ and $Symp(M,\w_2)$ are isomorphic as topological groups.    Hence we will use $Symp(M,u)$ as an abstract Lie group defined up to homeomorphism,  when the topological type of the group is concerned, as defined in the third bullet of Definition \ref{sympu}.}

To prove any stability result for symplectomorphism groups, we have to relate $\Symp(M,u)$ and $\Symp(M,u')$ for some $u,u'\in\mC_M$ in some natural way. For this purpose, we may consider the fibrations
\[
\begin{tikzcd}[row sep=small]
\Symp(M,u) \arrow[r] & \Diff_u \arrow[d, equal]\arrow[r] & \Omega_u \\ 
\Symp(M,u') \arrow[r] & \Diff_{u'} \arrow[r] & \Omega_{u'}
\end{tikzcd}
\]
However, there is no canonical way to define a map between $\Omega_u$ to $\Omega_{u'}$. On the other hand, there exist natural inclusions between various subspaces of $\mA_u$ and $\mA_{u'}$. Consequently, as was first observed by D. McDuff in~\cite{McD01}, by replacing the spaces of symplectic forms by the spaces of compatible almost-complex structures, one obtains comparison maps between homotopy groups of $\Symp(M,u)$ and $\Symp(M,u')$ using the long exact homotopy sequence associated to~\eqref{homotopy fibration}. 
\begin{lma}  \label{inclusion} Let $M$ be a rational surface with $\chi(M) \leq 12$.
If  $\mS_{u}\subset \mS_{u'}$ then  $\mA_{u}\subset \mA_{u'}$.
\end{lma}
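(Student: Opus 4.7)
The plan is to invoke the almost K\"ahler Nakai-Moishezon criterion of Theorem \ref{ccinf} together with the tame-compatible comparison of Theorem \ref{c=t}. Fixing $J\in\mA_u$, by definition there exists $\omega\in\mT_u$ compatible with $J$, so $b^+(M)=1$ and $\mK_J^c\neq\emptyset$; combining the two theorems gives $\mK_J^c = \mK_J^t = C_J^{\vee,>0}(M)\cap \mP$. To conclude that $J\in\mA_{u'}$, I would verify that $u'$ lies in this common set, which then yields a symplectic form in class $u'$ compatible with $J$.

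The condition $u'\in \mP$ in the correct component is immediate: $u'$ is a symplectic class on the oriented manifold $M$, so $(u')^2>0$ and $u'$ lies in the same component of $\mP$ as $u$. The substantive step is showing $u'\cdot [C]>0$ for every irreducible $J$-holomorphic subvariety $C$, which I would split by the sign of $[C]^2$.

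When $[C]^2\geq 0$, the taming form $\omega$ satisfies $\omega\cdot [C]>0$, so $[C]$ lies in the closure of the same component of $\mP$ as $u$ and $u'$; the light-cone lemma in the Lorentzian geometry of $H^2(M;\RR)$ (using $b^+=1$) then forces $u'\cdot[C]>0$. When $[C]^2<0$, I would argue that $[C]\in\mS_u$: on a rational surface with $\chi\leq 12$, an irreducible negative-square $J$-hol subvariety is rational, so its normalization is a simple $J$-hol map from $\CP{1}$; Condition \ref{negsphemb}, available through Lemma \ref{rat1}, then upgrades this map to an embedding, producing an embedded $J$-hol sphere, which is automatically $\omega$-symplectic. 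Thus $[C]\in \mS_\omega=\mS_u\subset \mS_{u'}$, so there exists $\omega''\in\mT_{u'}$ in whose class $[C]$ is represented by an embedded symplectic sphere, giving $u'\cdot[C]=\omega''\cdot[C]>0$.

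The main obstacle I anticipate is not the cone theorem itself but the genus statement in the negative-square case: one must know that on these small rational surfaces every irreducible $J$-hol subvariety of negative square is rational. This is exactly where the hypothesis $\chi\leq 12$ enters (via Zhang's analysis on $S^2\times S^2$ and $\CP{2}\#k\overline{\CP{2}}$, $k\leq 9$), and once it is in hand the rest of the proof is a direct application of the cone theorem to the hypothesis $\mS_u\subset\mS_{u'}$.
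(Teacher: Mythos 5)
Your proposal is correct and follows essentially the same route as the paper's proof: apply Zhang's almost K\"ahler Nakai--Moishezon theorem (Theorem \ref{ccinf}) to a form $\omega\in\mT_u$ compatible with $J$, using $\mS_u\subset\mS_{u'}$ together with Lemma \ref{rat1} to see that $u'$ is positive on the relevant $J$-holomorphic curve classes, hence lies in the almost K\"ahler cone of $J$. Your write-up merely spells out the steps the paper leaves implicit (the light-cone argument for non-negative square curves and the rationality/embeddedness of negative-square irreducible curves), while the invocation of Theorem \ref{c=t} is redundant since Theorem \ref{ccinf} already identifies the compatible cone.
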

\begin{proof}
Let $J\in \mA_{u}$, i.e.  $J$ is compatible with some $\w\in \mT_{u}$, and let $S_J$ be the set of classes of embedded $J$-holomorphic rational curves. Then clearly $S_J\subset \mS_{\w}=\mS_u$ for any $\w$ taming $J$. Since $\mS_{u}\subset\mS_{u'}$, it follows that $u'$ is positive on $S_J$. Then, by Theorem~\ref{ccinf},  we conclude that $u'$ is in the almost K\"ahler cone of $J$. In other words, $J\in \mA_{u'}$. This shows $\mA_{u}\subset \mA_{u'}$.
\end{proof}

\begin{lma} \label{level n}
Let $M$ be a rational surface with $\chi(M) \leq 12$. If $\mS_{u}^{\geq -n}=\mS_{u'}^{\geq -n}$,  with $n \geq 2$, then $\mA_{u}^{2n} =\ \mA_{u'}^{2n}$.
\end{lma}
\begin{proof}
We first observe that if $\mD\subset \mS^{\leq -2}_u$ with  $cod(\mD)<2n$, then $\mD\subset \mS_{u}^{\geq -n}$. Since $\mS_{u}^{\geq -n}=\mS_{u'}^{\geq -n}$, the  decompositions of $\mA_u^{2n}$ and $\mA_{u'}^{2n}$ in ~\eqref{Adec} are indexed by the same set of admissible subsets of  $S^{\geq -n}_u$, all with codimension less than $2n$. We will show that, for each such $\mD$, we have $\mA_{u, \mD}=\mA_{u', \mD}$.

We take any such admissible subset $\mD$. If $J\in \mA_{u, \mD}$, then $J$ is compatible with some $\w\in \mT_{u}$ and the {\bf only} $J$-holomorphic embedded spheres of self-intersections $\leq -2$ represent the homology classes in $\mD$. Since $u'$ is positive on all classes in $\mD$, it follows from Theorem~\ref{ccinf} that $u'$ is in the almost K\"ahler cone of $J$, i.e. $J$ is compatible with some symplectic form $\w'$ with class $u'$. In other words, $J\in \mA_{u', \mD}$.  This means that $\mA_{u, \mD}\subset \mA_{u', \mD}$. The same strategy applies to prove the converse.
\end{proof}

\begin{lma}\label{basic}
Let $M$ be a rational surface with $\chi(M) \leq 12$. If $\mA_{u}\subset \mA_{u'}$,  $\mA^{2n}_{u}\subset \mA^{2n}_{u'}$ and  $\mS_{u}^{\geq -n}=\mS_{u'}^{\geq -n}$, $n\geq 2$, then $\pi_i\Symp(M,u)=\pi_i\Symp(M, u')$ for $0\leq i\leq 2n-3.$
\end{lma}

\begin{proof}
Firstly, since  $\mS_{u}^{\geq -n}=\mS_{u'}^{\geq -n}$, $n\geq 2$ implies $\mS_{u}^{\geq -2}=\mS_{u'}^{\geq  -2}$, by Proposition~\ref{-2same}, $\Diff_u(M) = \Diff_{u'}(M)$.  We then consider the homotopy commuting diagram
\begin{equation}\label{hcomm}
\begin{tikzcd}[row sep=small]
\Symp(M,u) \arrow[r]\arrow[d] & \Diff_u \arrow[d, equal]\arrow[r] & \mA_u\arrow[d,hookrightarrow] \\ 
\Symp(M,u') \arrow[r] & \Diff_{u'} \arrow[r] & \mA_{u'}
\end{tikzcd}
\end{equation}
By Proposition~\ref{p:stratum}, the complement of the inclusion $\mA_{u}\subset \mA_{u'}$ is a union of submanifolds of codimension $\geq 2n$. Then, by standard transversality arguments, the inclusion induces an isomorphism  $\pi_i(\mA_{u})\to \pi_i(\mA_{u'})$  for $0\leq i\leq 2n-2$. Therefore, from the homotopy commuting diagram \eqref{hcomm} and the associated ladder of homotopy long exact sequences, we obtain isomorphisms $\pi_i \Symp(M,u)\to \pi_i\Symp(M,u')$ for $0\leq i\leq 2n-3$.
\end{proof}

We now have established the following weaker version of Theorem~\ref{main-u version}. 

\begin{prp}\label{2n-3}
Let $M$ be a rational surface where $\chi(M)\le 12$. If $\mS_{u}\subset \mS_{u'}$ and $\mS_{u}^{\geq -n}=\mS_{u'}^{\geq -n}$, for some $n\geq 2$, then $\pi_i\Symp(M, u) =\pi_i\Symp(M, u')$ for $0\leq i\leq  2n-3$. Consequently, if $\mS_{u}=\mS_{u'}$ then $\pi_i\Symp(M, u)= \pi_i\Symp(M, u')$ for all $i \in \NN$.
\end{prp}
\begin{proof}
It follows from Lemma~\ref{inclusion}, Lemma~\ref{level n} and Lemma~\ref{basic}.
\end{proof}

To prove Theorem~\ref{main-u version}, it still  remains to remove the condition $\mS_{u}\subset \mS_{u'}$. We will actually reduce the general case to the case $\mS_{u}\subset \mS_{u'}$.

\begin{proof} [Proof of Theorem~\ref{main-u version}]
By Proposition~\ref{redtran}, we can assume both $u$ and $u'$ belong to the same normalized reduced slice whose description is given in Proposition~\ref{nrsc}.

We first observe that there is nothing to prove in the case of the level $n$ monotone chamber since, as explained in Corollary~\ref{asr}, it only contains the point $-\frac{1}{3}K_0$.

Next, we observe that for symplectic classes $u$ and $u'$ in the same level $\infty$ chamber one has $\pi_i\Symp(M, u)=\pi_i\Symp(M, u')$ for all $i\geq 0.$  This follows immediately from Proposition~\ref{2n-3}  since, in the level $\infty$  chamber, we have $\mS_{u}=\mS_{u'}$ and hence the assumptions of the proposition are satisfied.
  
Now, assume that neither $u$ nor $u'$ is the monotone class, and that they belong to the same level $n$ chamber.  
In order to apply Proposition~\ref{2n-3}, we need to understand the symmetric difference between $\mS_u$ and $\mS_{u'}$, that is,
\[ \mS_u\,\triangle \,\mS_{u'}=\left(\mS_u\setminus \mS_{u'}\right)\cup \left(\mS_{u'}\setminus \mS_u\right).\]

Since $\mS_{u}^{\geq -n}=\mS_{u'}^{\geq -n}$, it follows that
\begin{equation}\label{-n-1}
\mS_{u} \triangle \mS_{u'}\subset  \mS_{u}^{\leq -(n+1)}\cup \mS_{u'}^{\leq -(n+1)}\subset \mS_{(K_0)}^{\leq -(n+1)},
\end{equation}
and, by Lemma~\ref{finite}, then $\mS_{u} \triangle \mS_{u'}$ is a finite set $\{S_1, \cdots, S_k\}$. {Note that for all  $i=1, \ldots, k,$  $u \cdot S_i$ and  $ u'\cdot S_i$ are both non-zero since $u$ and $u'$ are not on the wall defined by $S_i$, and they have different signs.  The reason is that by definition $S_i$ is symplectic (hence pairs positively) for one class (say $u$ without loss of generality), meanwhile not symplectic for the other ($u'$ in this case), and by \cite[Theorem 2.7]{DoL10} $u'$ pairs negatively with $S_i$.  }

By Corollary~\ref{cor:convexity of chambers}, the line segment $\overline{uu'}$ connecting $u$ and $u'$ lies in the level $n$ chamber of $u$ and $u'$.
We say the line segment $\overline{uu'}$ is \emph{generic}, if it is in general position with respect to the set of hyperplanes defined by classes in $\mS_{u} \triangle \mS_{u'}$. 
In particular, under this genericity condition on the segment $\overline{uu'}$, its endpoints $u$ and $u'$ belong to the interior of their respective level $\infty$ chamber and $\overline{uu'}$ intersects the hyperplanes defined by $\{S_1,\ldots,S_k\}$ at $k$ distinct points $p_i$. { Clearly, we can assume the points $p_i$ are ordered along line segment $\overline{uu'}$ from $u$ to $u'$.  Namely,  $u<p_1<\cdots<p_k<u'$.}

Assume for now that $\overline{uu'}$ is in generic position. 
The points $\{p_1,\ldots,p_k\}$ define $k+1$ intervals on $\overline{uu'}$. Pick  $u_0=u$ from the first interval, $u_{k}=u'$ from the last interval, and one point $u_i, 1\leq i \leq k-1, $ from the interior of each of the remaining  $k-1$ intervals. Then for each $0\leq i \leq k-1,$ we consider the pair of adjacent points $u_i$ and $u_{i+1}$. 
  
{The key observation is that $\mS_{u_i}$ and $\mS_{u_{i+1}}$ only differ by the class $S_{i+1} \in \mS_{u} \triangle \mS_{u'}\subset \mS_{(K_0)}^{\leq -(n+1)}$.} Consequently, 
we either have $\mS_{u_i}\subset \mS_{u_{i+1}}$ or $\mS_{u_{i+1}}\subset \mS_{u_{i}}.$ Without loss of generality, we assume that $\mS_{u_i}\subset \mS_{u_{i+1}}$. Then we have the inclusion $\mA_{u_i}\subset \mA_{u_{i+1}}$ by Lemma~\ref{inclusion}. { On the other hand it follows from formula \eqref{-n-1} that  $\mS_{u_i}^{\geq -n}= \mS_{u_{i+1}}^{\geq -n}$. Therefore  by Lemma~\ref{level n} we get  $\mA_{u_i}^{2n} =\ \mA_{u_{i+1}}^{2n}$.} Now all the assumptions of Proposition ~\ref{2n-3} are satisfied and we can now conclude  that $\pi_j(\Symp(M, \w_i))=\pi_j(\Symp(M, \w_{i+1}))$ for $0\leq j\leq 2n-3$, where $[\w_i]=u_i$ and $[\w_{i+1}]=u_{i+1}$. Since this holds for all pairs $(u_i,u_{i+1})$, this establishes Theorem~\ref{main-u version} in the generic case.

If the line segment $\overline{uu'}$ is not in generic position, then we can do a small perturbation of $u$ and $u'$ in the interior of their respective level $\infty$ chamber, making $\overline{uu'}$ generic without changing the homotopy groups of $\Symp(M,u)$ and $\Symp(M,u')$. To see this, first recall that Lemma~\ref{asr} states that any level $\infty$ chamber other than the monotone one contains dense sets of rational and irrational points. We then observe that the intersection points of the hyperplanes are all rational points, since they are solutions to linear equation systems with integer coefficients. Hence, we can perturb $u$ to a rational point and $u'$ to an irrational point within their own level $\infty$ chamber, to obtain a line segment $\overline{uu'}$ in generic position. 
\end{proof}

\begin{rmk}\label{12} We speculate that Theorem~\ref{main-u version} holds for arbitrary rational surfaces. 
However, there are difficulties to generalize the arguments to rational surfaces with $\chi\geq 13$. We mention a couple here.
\begin{enumerate}
    \item The almost K\"ahler Nakai-Moishezon criterion has not been established for these rational surfaces.
    \item Lemma~\ref{finite} is not valid for these rational surfaces and so the ``walls" are somewhere dense in the symplectic cone.
\end{enumerate}
\end{rmk}

\subsection{Relations with Moser's fibration}\label{compwise}

We first discuss an alternative approach for the proof of Theorem \ref{main-u version} via Kronheimer's homotopy fibration given in ~\cite{Kro99}.
Let  $\w$  be a symplectic form on a rational or ruled surface $M$, and let $\mT_{\w}$ be the space of symplectic forms that are isotopic to $\w$. If $\w\in \mT_u$, then $\mT_{\w}$ is the path connected component of $\mT_{u}$ containing $\w$. Let $ G_{\w}=\Symp(M,\w)\cap \Diff_0(M)$. Moser's lemma implies that $\Diff_0(M)$ acts transitively on $\mT_{\w}$, and that we have a fibration
\begin{equation}  \label{Kronheimer}
G_{\w}\to \Diff_0(M) \to \mT_{\w}.
\end{equation}
Note that the fibration~\eqref{Kronheimer} coincides with fibration~\eqref{UFIB} whenever $\mT_{\w}=\mT_{u}$, that is, if $\mT_{u}$ is path connected, or equivalently, if $\Diff_{u}$ is path connected. 
This is generally unknown, even for basic rational surfaces like $(\CC P^2, \w_{FS})$. Note that it is conjectured that $\mT_{[\w_{FS}]}$ is contractible, see~\cite[Problem 3 in Section 14.1]{MS17}.  

Let $\mA_{\w}$ be the space of almost complex structures that are compatible with some element in  $\Omega_{\omega}$ and let $P_{\w} = \{(\alpha, J) \in  \mT_{\w} \times \mA_{\w}~|~\alpha \text{~is compatible with~} J\}$.
By the  argument  in Lemma~\ref{convexfiber}, $\mA_{\w}$ and $P_{\w} $ are path connected. Moreover, $\mA_{\w}$  is a path connected component of $\mA_u$ and is homotopy equivalent to $\mT_{\w}$. In fact, $\mT_{\w}$ and $\mA_{\w}$ correspond to each other under the bijection between the sets of path connected components of $\mT_{u}$ and $\mA_{u}$ in Lemma~\ref{convexfiber}.
In particular, the same arguments as in Section~\ref{stra} imply that the sequence of maps
\begin{equation} \label{homotopy fibration for omega}
G_{\w}\to \Diff_0(M) \to \mA_{\w}.
\end{equation}
induces a long exact homotopy sequence. Let $J\in \mA_{\w}$.  Suppose we perform the $b^+=1$ $J-$compatible inflation established in Lemma \ref{Weak Inflation b1+} to $\omega$ to get a symplectic form $\w'$ compatible with $J$.  
For $\w'$  we have $\mT_{\w'}$, $\mA_{\w'}$ and the sequence of maps $G_{\w'}\to \Diff_0(M) \to \mA_{\w'}$.  Moreover, if we pick $\tilde J \in \mA_\w$ and repeat the inflation to obtain a $\widetilde{\w'}$ compatible with $\tilde J$ and cohomologous to $\w'$, by Lemma~\ref{convexfiber} $\w'$ and  $\widetilde{\w'}$ have to be isotopic.
We can prove properties analogous  to Lemma~\ref{inclusion} and Lemma~\ref{level n} for $S_{\w}$, $S_{\w'}$, $\mA_{\w}$, $\mA_{\w'}$ using the same arguments. Similarly, we  prove the analogue of Lemma ~\ref{basic} for $\pi_0(G_{\w})$, $\pi_0(G_{\w'})$, $\pi_i(\Symp(M, \w))$ and $\pi_i(\Symp(M, \w')$, with $i\geq 1$, using the following commutative diagram
\begin{equation*}
\begin{tikzcd}[row sep=small]
G_{\w} \arrow[r]\arrow[d] & \Diff_0(M) \arrow[d, equal]\arrow[r] & \mA_{\w}\arrow[d,hookrightarrow] \\ 
G_{\w'} \arrow[r] &\Diff_0(M) \arrow[r] & \mA_{\w'}
\end{tikzcd}
\end{equation*}
We can also prove the analogues of Theorem~\ref{2n-3} and Theorem~\ref{main-u version}, by performing small modifications in the proofs, mostly in notation, except that $\pi_0(\Symp(M, \w))$ and $\pi_0(\Symp(M, \w'))$ are replaced by $\pi_0(G_{\w})$ and $ \pi_0(G_{\w'})$, respectively. In order to obtain equalities between  $\pi_0(\Symp(M, \w))$ and $\pi_0(\Symp(M, \w'))$, we apply Lemma~\ref{homological action}, Proposition~\ref{-1same} and the following lemma which identifies $G_{\w}$ as the Torelli symplectic mapping class group for a rational surface. 

 \begin{lma}\label{G}
 Let $(M, \w)$ be a symplectic rational surface. Then $G_{\w}=\Symp_h(M, \w)$. 
\end{lma}
\begin{proof}
Clearly, we have 
\[  G_{\w}=\Symp (M, \w)\cap \Diff_0(M) \subset \Symp_h(M, \w).\]
By ~\cite{She10} and~\cite{LLW16},  for a rational or ruled surface $M$, we have
\[\Symp_h(M, \w) \subset \Diff_0(M).\]

Hence $Symp_h(M,\w)\subset G_{\w}=\Symp (M, \w)\cap \Diff_0(M).$
{
Therefore, $G_{\w}=\Symp_h(M, \w)$ immediately follows from the inclusions in the two directions.
Note that $\Symp_0(M, \w)\subset  G_\w$
and $\pi_0(G_{\w})=\Symp_h/\Symp_0$ is just the Torelli symplectic mapping class group. }
\end{proof} 

\subsection{Relations with embeddings of symplectic balls}
In this last section, we explain the implications of the stability theorem~\ref{FinerStabilityThm} to the stability of spaces of symplectically embedded balls. To this end, we first recall the setting proposed in~\cite{LP04} to investigate these embedding spaces.

Consider a symplectic $4$-manifold $(M,\w)$ of capacity $c_{M}$. Let $B^4(c)\subset\RR^4$ be the closed standard ball of radius $r$ and capacity $c=\pi r^2$, and let $\Symp(B^4 (c))$ be the group of symplectomorphisms of $B^{4}(c)$ that extend to some open neighbourhood of $\Symp(B^4 (c))$. Let $\Emb_{\w}(B^4(c), M)$ be the space, endowed with the $C^{\infty}$-topology, of all symplectic embeddings of $B^4(c)$ into $(M,\w)$. By definition, this space is non-empty whenever $0<c<c_{M}$. Let $\Im\Emb_{\w}(B^4(c), M)$ be the space of subsets of $M$ that are images of maps belonging to $\Emb_{\w}(B^4(c), M)$, which we topologize as the quotient
\begin{equation*} 
\Im\Emb_{\w}(B^4(c), M):= \Emb_{\w}(B^4(c), M) / \Symp(B^4 (c)).
\end{equation*}
where the group $\Symp(B^4 (c))$ acts by reparametrizations. By definition, $\Im\Emb_{\w}(B^4(c), M)$ is the space of unparametrized balls of capacity $c$ of $M$. For any choice of capacities $c<c'<c_{M}$, the restriction of embeddings to $B^{4}(c)\subset B^{4}(c')$ induces a continuous map 
\[r_{c',c}:\Emb_{\w}(B^4(c'), M)\to \Emb_{\w}(B^4(c), M)\]
We would like to investigate the connectivity of $r_{c',c}$ and, in particular, to find conditions under which $r_{c',c}$ is a weak homotopy equivalence,  {that is, we would like to understand when this map induces isomorphisms on the homotopy groups of the embedding spaces.}

To simplify the discussion, let us assume that the space $\Emb_{\w}(B^4(c), M)$ is connected. As was shown by D. McDuff in~\cite{McD98}, this is the case for all rational or ruled surfaces. Under this hypothesis, the group $\Symp(M,\w)$ acts transitively on $\Emb_{\w}(B^4(c),M)$. { After choosing a fixed embedding  $\iota_c: B^4(c)\into (M, \omega)$, denote the image of the ball by $B_{\iota_{c}}:=\iota_c(B^{4}(c))$. We obtain two fibrations} 

\begin{equation}\label{fibration embeddings}
\Symp^{\id}(M,B_{\iota_{c}})\to \Symp(M,\w)\to\Emb_{\w}(B^4(c),M)
\end{equation}
\begin{equation}\label{fibration unparametrized}
\Symp(M,B_{\iota_{c}})\to \Symp(M,\w)\to\Im\Emb_{\w}(B^4(c),M)
\end{equation}
whose fibers are, respectively,
\[\Symp^{\id}(M,B_{\iota_{c}}) = \{\phi\in\Symp(M,\w)~|~\phi|_{B_{\iota_{c}}}=\id\}\] 
and
\[\Symp(M,B_{\iota_{c}}) = \{\phi\in\Symp(M,\w)~|~\phi(B_{\iota_{c}})=B_{\iota_{c}}\}.\]
The two fibrations~\eqref{fibration embeddings} and~\eqref{fibration unparametrized} are essentially equivalent and their relation can be understood through  the evaluation fibration
\[\Symp^{\id}(M,B_{\iota_{c}}) \to \Symp(M,B_{\iota_{c}})\to \Symp(B^4(c))\simeq\Sp(4)\simeq\UU(2).\]
let $\widetilde{M_c}:=(\widetilde M, \widetilde{\w}_c)$ be the blow-up of $M$ at $B_{\iota_{c}}$.
As explained in~Section 2 of~\cite{LP04}, there is a homotopy equivalence
\begin{equation}\label{homotopy equivalence ball-fiber}
\Symp(\widetilde{M_c},\Sigma) \simeq \Symp(M,B_{\iota_{c}})
\end{equation}
where $\Symp(\widetilde{M_c},\Sigma)\subset \Symp(\widetilde{M},\widetilde{\w}_c)$ is the subgroup of symplectomorphisms of the blow-up sending the exceptional divisor $\Sigma$ to itself. 

The homotopy type of $\Symp(\widetilde{M_c},\Sigma)$ can be investigated using relative analogues of the fibration~\eqref{UFIB} and of the maps~\eqref{homotopy fibration}. To see this, let $\Diff_c(\widetilde{M},\Sigma)$ be the group of diffeomorphisms of the blow-up $\widetilde{M}$ that preserve the class $[\widetilde{\w}_c]$ and that leave the exceptional divisor $\Sigma$ invariant. Let $\Omega_c(\Sigma)$ be the space of symplectic forms cohomologous to $\widetilde{\w}_c$ and for which $\Sigma$ is symplectic. By applying a relative version of Moser's lemma, one can show that there is a fibration
\begin{equation}\label{relativeUFIB}
\Symp(\widetilde{M_c}, \Sigma)\to \Diff_c(\widetilde{M},\Sigma) \to \mT_{c}(\Sigma),
\end{equation}
Similarly, we can define the space of pairs 
\[P_c(\Sigma) =\left\{ (\w',J)~|~\w'\in\Omega_c(\Sigma),~J\text{~is compatible with~}\w',~\Sigma\text{~is~}J\text{-holomorphic} \right\}\]
and the space of compatible almost-complex structures
\[\mA_c(\Sigma) = \left\{J\text{~is compatible with some~}\w'\in\Omega_c(\Sigma)\text{~and~}\Sigma\text{~is~}J\text{-holomorphic}\right\}.\]
Then, the same arguments as in Section~\ref{stra} yield a sequence of maps
\begin{equation} \label{relative homotopy fibration}
\Symp(\widetilde{M_c}, \Sigma)\into \Diff_c(\widetilde{M},\Sigma) \to \mA_{c}(\Sigma).
\end{equation}
that induces a long exact sequence of homotopy groups. We introduce a filtration $\mA^{2n}_c(\Sigma)$ defined, as before, in terms of $J$-holomorphic spheres of self-intersections $\leq -2$. Since $\mA_c(\Sigma)$ is open in $\mA_c$, the codimension of prime subsets in $\mA_c(\Sigma)$ is given by the formula appearing in Definition~\ref{def:prime subset}. It follows that the complement of $\mA^{2n}_c(\Sigma)$ in $\mA_c(\Sigma)$ is a union of submanifolds of codimensions $\geq 2n$. Note, however, that only negative classes $S\in\mS^{\leq-2}_c$ with $S\cdot [\Sigma]\geq 0$ can be represented by $J$-holomorphic spheres for $J\in \mA_c(\Sigma)$. This implies that $\mA_c(\Sigma)$ may decompose into a strictly smaller number of strata than $\mA_c$, and that the codimension of the complement of $\mA^{2n}_c(\Sigma)$ in $\mA_c(\Sigma)$ may be strictly greater than $2n$. With this understood, we can finally prove Theorem~\ref{StabilitySingleBall}.
\begin{proof}[Proof of Theorem~\ref{StabilitySingleBall}]
Let $(M,\w)$ be a rational surface with $\chi(M)\leq 11$ of capacity $c_M$. Pick two capacities $0<c<c'<c_M$. Suppose that, after performing symplectic blow-ups of sizes $c$ and $c'$ on $(M,\w)$, we obtain symplectic forms $\widetilde{\w}_{c}$ and $\widetilde{\w}_{c'}$ that belong to the same level $n\geq 2$ chamber of $\widetilde{M}$. Write $u=[\widetilde{\w}_{c}]$ and $u'=[\widetilde{\w}_{c'}]$ and consider the diagram
\begin{equation}
\begin{tikzcd}[row sep=small]
\Symp(\widetilde{M_u},\Sigma) \arrow[r]\arrow[d] & \Diff_u(\widetilde{M},\Sigma) \arrow[d, equal]\arrow[r] & \mA_u(\Sigma)\arrow[d,hookrightarrow] \\ 
\Symp(\widetilde{M_{u'}},\Sigma) \arrow[r] & \Diff_{u'}(\widetilde{M},\Sigma) \arrow[r] & \mA_{u'}(\Sigma)
\end{tikzcd}
\end{equation}
Because $u$ and $u'$ are in the same level $n\geq 2$ chamber, Lemma~\ref{level n} implies that we have equality $\mA^{2n}_u=\mA^{2n}_{u'}$ which, in turns, immediately implies that $\mA^{2n}_u(\Sigma)=\mA^{2n}_{u'}(\Sigma)$. We also have the equality $\Diff_u(\widetilde{M},\Sigma)=\Diff_{u'}(\widetilde{M},\Sigma)$, and arguing as in the proofs of Proposition~\ref{2n-3} and of Theorem~\ref{main-u version}, we conclude that we have isomorphisms $\pi_i\Symp(\widetilde{M_{u}},\Sigma) \simeq \pi_i\Symp(\widetilde{M_{u'}},\Sigma)$ in the range $0\leq i\leq 2n-3$. From the construction of the homotopy equivalence~\eqref{homotopy equivalence ball-fiber} given in~\cite{LP04} Section 2, we conclude that the inclusions $\Symp(M,B_{\iota_{c'}})\into\Symp(M,B_{\iota_{c}})$ and  $\Symp^{\id}(M,B_{\iota_{c'}})\into\Symp^{\id}(M,B_{\iota_{c}})$ are $(2n-3)$-connected. Combining these restriction maps with the fibration~\eqref{fibration embeddings} yields a strictly commuting diagram
\begin{equation}\label{ladder embeddings}
\begin{tikzcd}[row sep=small]
\Symp^{\id}(M,B_{\iota_{c'}})\arrow[d, hookrightarrow]\arrow[r] & \Symp(M,\w)\arrow[d,equal]\arrow[r] & \Emb_{\w}(B^4(c'),M)\arrow[d,"~r_{c',c}"] \\
\Symp^{\id}(M,B_{\iota_{c}})\arrow[r]  & \Symp(M,\w)\arrow[r] & \Emb_{\w}(B^4(c),M)
\end{tikzcd}
\end{equation}
Then, the 5-lemma implies that the restriction map $r_{c',c}$ is at least $(2n-3)$-connected.
\end{proof}

In general, the stability Theorem for embeddings is not enough to compute the homotopy type of $\Emb_{\w}(B^4(c), M)$. However, there are situations in which we can combine Theorem~\ref{StabilitySingleBall} together with symmetry arguments or limit procedures to gain some understanding of the embedding space. For example, when the capacity of the symplectic ball $B^4(c)$ is small enough, we expect embedded balls to behave more or less like points in $(M,\w)$. This is the content of Corollary~\ref{cor:embeddings and stabilizers} that we now prove.

\begin{proof}[Proof of Corollary~\ref{cor:embeddings and stabilizers}]
We first assume $M\neq \CP{2}$. Suppose the capacity $c$ of the embedded balls is smaller than the symplectic area of any embedded symplectic sphere of negative self-intersection in the blow-up $(\widetilde{M_c},\widetilde{\w}_c)$. In particular, the class $E$ of the exceptional divisor $\Sigma$ has the smallest symplectic area among all exceptional classes in $\mE(\widetilde{M_c})$. By Lemma~1.2 in~\cite{P08ii}, this implies that for all $J\in\mA_u$, the class $E$ is represented by an embedded $J$-holomorphic sphere, that is, $\mA_c(\Sigma)=\mA_u$. In particular, $E\cdot A\geq 0$ for each negative class $A\in \mS_{[\widetilde{\w}_c]}^{\leq-1}$. It follows that performing inflation along the exceptional divisor $\Sigma$ decreases the symplectic area of $E$ while it increases the area of all other classes in $\mS_{[\widetilde{\w}_c]}^{\leq-1}$. This shows that the inflation along $\Sigma$ produces symplectic forms in the same level $\infty$ chamber than $\widetilde{\w}_c$. In particular, this $\infty$ chamber contains symplectic forms for which the exceptional class $E$ has arbitrarily small area $\epsilon>0$ and, by Theorem~\ref{StabilitySingleBall}, the weak homotopy type of $\Symp(\widetilde{M_c},\Sigma)$ is stable within this chamber. We can now argue as in~\cite{P08i} Section 3.1 to show that we have weak homotopy equivalences
\[\Symp(\widetilde{M_c},\Sigma)\simeq \Symp(M,B_{\iota_{c}})\simeq \Symp(M,p)\]
where $\Symp(M,p)$ is the stabilizer of a point in $M$, and that we have a commutative diagram
\begin{equation}\label{ladder stabilizer}
\begin{tikzcd}[row sep=small]
\Symp^{\id}(M,B_{\iota_{c}})\arrow[d, hookrightarrow]\arrow[r] & \Symp(M,\w)\arrow[d,equal]\arrow[r] & \Emb_{\w}(B^4(c),M)\arrow[d,"~\ev"] \\
\Symp(M,p)\arrow[r]  & \Symp(M,\w)\arrow[r] & \mathrm{SF}(M)
\end{tikzcd}
\end{equation}
where $\mathrm{SF}(M)$ is the symplectic frame bundle of $M$, where $\ev:\Emb_{\w}(B^4(c),M)\to \mathrm{SF}(M)$ is the evaluation of the derivative at the center, and where $\Symp^{\id}(M,B_{\iota_{c}})\to \Symp(M,p)$ is the inclusion. It follows that the map
\[\ev:\Emb_{\w}(B^4(c),M)\to \mathrm{SF}(M)\]
is a weak homotopy equivalence. Finally, taking the quotient by $\Sp(4)\simeq\UU(2)$ on both sides yields the weak homotopy equivalence
\[\Im\Emb_{\w}(B^4(c),M)\simeq M\]
This concludes the proof when $M\neq\CP{2}$. 

In the case $M=\CP{2}$, the only difference is that Lemma~1.2 in~\cite{P08ii} does not apply. However, because we are assuming that the capacity $c$ of the embedded balls is smaller than the symplectic area of \emph{any} embedded symplectic sphere of negative self-intersection in the one-point blow-up $\CP{2}\#\overline{\mathrm{\CP}}^2$, it still follows that the class $E$ is symplectically indecomposable and that it is represented by an embedded $J$-sphere for any choice of almost-complex structure $J\in\mA_c$. The rest of the argument is the same as before.
\end{proof}

We end this paper by stating a conjecture that generalizes Corollary~\ref{cor:embeddings and stabilizers}. Given any symplectic 4-manifold $(M, \w)$, we consider the embedding of $k$ disjoint balls of sizes $c_1\ldots, c_k$.  We denote the space of such embeddings by $\Emb_k(c_i)$. Let $\Conf_k(M)$ be the configuration space of ordered $k$ points in $M$. There is a forgetful map 
\[\sigma:\Emb_k(c_i)\to \Conf_k(M),\]
defined by evaluating an embedding at the center of each of the balls. This map allows for a comparison of the space of images $\Im\Emb_k(c_i)$ with $\Conf_k(M)$ through a diagram of fibrations. In~\cite{LWnote}, there is a conjectural higher homotopical generalization of Biran's ball packing stabilization theorem, which can be viewed as a $k$-fold version of Corollary~\ref{cor:embeddings and stabilizers}. 
\begin{conj}[Conjecture 5.2 in~\cite{LWnote}]\label{conj:conf}
For any $m\in\ZZ^+$, there exists an $\epsilon(m)>0$ such that the homotopy groups $\pi_i\Im\Emb_k(c_i)$ and $\pi_i \Conf_k(M)$ are isomorphic for all $0\leq i\leq m$ whenever $c_i<\frac{\epsilon(m)}{k}$.
\end{conj}

One can use our stability Theorem~\ref{main-u version} together with the proof of Corollary~\ref{cor:embeddings and stabilizers} to show that this conjecture is true for $M=\CC P^2$ with $k\leq 8$. For $k\ge 9$, it follows from~\cite{McD08} and~\cite{LLW16} that $\pi_1(\Symp(\CC P^2\# k\overline{ \CC P^2},\w))$ is finitely generated and that its rank is bounded by $\frac{k(k+1)}{2}$. Hence, it seems possible to prove stability at the $\pi_1$ level using techniques similar to the ones presented in the present manuscript. It is reasonable to speculate that a version of Theorem~\ref{main-u version} still holds for ${ \CC P^2}\# k\overline{ \CC P^2}$, $k>9$ (cf. Remark~\ref{12}) and hence that Conjecture~\ref{conj:conf} holds for $M=\CC P^2$ with any number of embedded balls.



\section*{Declarations}

\subsection*{Conflict of interest} All authors declare that they have no conflicts of interest.
\subsection*{Availability of data} No datasets were generated or analyzed for the current work.
\subsection*{Authors' contributions} The authors contributed equally to this work.


\end{document}